\newcommand\RR{{\mathbb R}}
\newcommand\ZZ{{\mathbb Z}}
\newcommand\ii{{\operatorname{i}}}
\theoremstyle{plain}
\newtheorem{teo}{Theorem}[section]
\newtheorem{theorem}[teo]{Theorem}
\newtheorem{corollary}[teo]{Corollary}
\newtheorem{lemma}[teo]{Lemma}
\newtheorem{conj}[teo]{Conjecture}
\newtheorem{proposition}[teo]{Proposition}
\theoremstyle{definition}
\newtheorem{example}[teo]{Example}
\newtheorem{remark}[teo]{Remark}
\newtheorem{rem}[teo]{Remark}
\DeclareMathOperator{\ehr}{ehr}
\DeclareMathOperator{\rk}{rk}
\title{Ehrhart polynomials of rank two matroids}
\author{Luis Ferroni}
\author{Katharina Jochemko}
\author{Benjamin Schr\"oter}
\address{
  Department of Mathematics, KTH Royal Institute of Technology, Stockholm, Sweden
}
\email{\{ferroni,jochemko,schrot\}@kth.se}
\subjclass[2020]{52B40, 52B20, 05A15, 05B35, 26C10}
\keywords{Ehrhart theory, lattice polytopes, matroids, log-concavity, real-rootedness, Ehrhart positivity}
\begin{document}

\begin{abstract}
Over a decade ago De Loera, Haws and K\"oppe conjectured that Ehrhart polynomials of matroid polytopes have only positive coefficients and that the coefficients of the corresponding $h^*$-polynomials form a unimodal sequence. The first of these intensively studied conjectures has recently been disproved by the first author who gave counterexamples in all ranks greater or equal to three. In this article we complete the picture by showing that Ehrhart polynomials of matroids of lower rank have indeed only positive coefficients. Moreover, we show that they are coefficient-wise bounded by the Ehrhart polynomials of minimal and uniform matroids. We furthermore address the second conjecture by  proving that $h^*$-polynomials of matroid polytopes of sparse paving matroids of rank two are real-rooted and therefore have log-concave and unimodal coefficients. In particular, this shows that the $h^*$-polynomial of the second hypersimplex is real-rooted thereby strengthening a result of De Loera, Haws and K\"oppe.
\end{abstract}

\maketitle

\section{Introduction}
\noindent
A \textbf{lattice polytope} in $\mathbb{R}^n$ is defined as the convex hull of finitely many vectors in the integer lattice $\mathbb{Z}^n$. A fundamental theorem by Ehrhart~\cite{Ehrhart} states that for any lattice polytope $\mathscr{P}$ the number of lattice points in the $t$-th dilate $t\mathscr{P}$ is given by a polynomial $\ehr (\mathscr{P},t)$ for all integers $t\geq 0$. The polynomial $\ehr (\mathscr{P},t)$, called the \textbf{Ehrhart polynomial} of the polytope $\mathscr{P}$, encodes geometric and combinatorial information about the lattice polytope such as its dimension and its volume which are equal to the degree and the leading coefficient, respectively. Characterizing Ehrhart polynomials, including finding interpretations for their coefficients, is an intensively studied question that remains widely open. One difficulty is the fact that the coefficients of the Ehrhart polynomial can be negative in general. Lattice polytopes whose Ehrhart polynomials have only positive (or nonnegative) coefficients are therefore of particular interest. Such polytopes are called \textbf{Ehrhart positive}. For further reading on Ehrhart positivity we recommend~\cite{EhrhartPositivity}.

This article is concerned with Ehrhart polynomials of matroid polytopes. Given a matroid $M$ on the ground set $E=\{1,\ldots,n\}$ with set of bases $\mathscr{B}\subseteq 2^E$ the \textbf{matroid (base) polytope} $\mathscr{P}(M)$ of $M$ is defined as
    \[ \mathscr{P}(M) := \operatorname{conv}\{e_B : B\in \mathscr{B}\}\subseteq \mathbb{R}^n\]
where $e_B := \sum_{i\in B} e_i$ is the indicator vector of the basis $B\in \mathscr{B}$ and $e_1,\ldots,e_n$ denotes the canonical basis of $\mathbb{R}^n$. 

Over a decade ago De Loera, Haws and K\"oppe conjectured that matroid polytopes are Ehrhart positive~\cite{DeLoera}. This conjecture together with companion conjectures has attracted considerable attention in the recent years \cite{GeneralizedPerm,CastilloTodd,ferroni-hypersimplices,jochemko2019generalized,knauer18}. Castillo and Liu~\cite{GeneralizedPerm} conjectured Ehrhart positivity for the larger class of generalized permutohedra (also known as polymatroids). In~\cite{ferroni2020ehrhart} the first author conjectured that the coefficients of matroid polytopes are not only positive but are moreover coefficient-wise bounded from below and above by the minimal matroid and the uniform matroid, respectively. Recently, the first author disproved these conjectures simultaneously by providing examples with negative coefficients of matroids whose rank ranges between three and corank three~\cite{ferroni2021matroids}. 

In this article we complete the picture by proving Ehrhart positivity for all matroids of rank $2$ or equivalently corank $2$. Matroid polytopes of rank~$1$ or corank~$1$ are unimodular simplices and therefore Ehrhart positive. 

One of our main results is the following concrete formula for Ehrhart polynomials of matroid polytopes of rank $2$ matroids that generalizes a formula for hypersimplices due to Katzman \cite{katzman-hypersimplices}. As a consequence we provide an elementary proof for the latter (Corollary~\ref{cor:EhrHypersimplex}).

\begin{theorem}\label{thm:main1}
    Let $M$ be a connected matroid of rank $2$. Suppose that $M$ has exactly $s$ hyperplanes of sizes $a_1, \ldots, a_s$. Then $s\geq 3$ and we have
        \[ \ehr(\mathscr{P}(M), t)\ =\ \binom{2t+n-1}{n-1} - \sum_{i=1}^{s} P_{a_i,n}(t)\enspace ,\]
    where
        \[ P_{a,n}(t) := \sum_{k=1}^a \binom{t+n-k-1}{n-k}\binom{t+k-1}{k-1} \, \]
         for $1\leq a\leq n$.
\end{theorem}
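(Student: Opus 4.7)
My plan is to use the hyperplane description of $\mathscr{P}(M)$ together with an inclusion--exclusion argument, and then to identify the resulting ``one-hyperplane-violated'' counts with $P_{a_j, n}(t)$. Since $M$ has rank $2$, its only proper nonempty flats are the hyperplanes $H_1, \ldots, H_s$, which moreover partition $E$; hence
\[
    \mathscr{P}(M) \ =\ \bigl\{x \in \mathbb{R}^n_{\geq 0} : \textstyle\sum_i x_i = 2,\ \sum_{i \in H_j} x_i \leq 1 \text{ for all } j \bigr\},
\]
and $t\mathscr{P}(M) \cap \mathbb{Z}^n$ consists of those $x \in \mathbb{Z}^n_{\geq 0}$ with $\sum_i x_i = 2t$ and $\sum_{i \in H_j} x_i \leq t$ for all $j$. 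The key simplification is that no two of the inequalities $\sum_{i \in H_j} x_i \geq t + 1$ can be violated simultaneously, since otherwise $\sum_i x_i$ would exceed $2t$. Consequently, inclusion--exclusion collapses to
\[
    \ehr(\mathscr{P}(M), t) \ =\ \binom{2t + n - 1}{n - 1} - \sum_{j=1}^s N_{a_j, n}(t),
\]
where the binomial coefficient counts all $x \in \mathbb{Z}^n_{\geq 0}$ with $\sum_i x_i = 2t$, and (after relabeling so that $H_j = \{1, \dots, a_j\}$) $N_{a, n}(t)$ denotes the number of such $x$ satisfying $x_1 + \cdots + x_a \geq t + 1$.

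It remains to prove $N_{a, n}(t) = P_{a, n}(t)$, which I plan to establish by induction on $a$. The base case $a = 1$ follows from the substitution $x_1 \mapsto x_1 - (t + 1)$: it yields $N_{1, n}(t) = \binom{t + n - 2}{n - 1} = P_{1, n}(t)$. For the inductive step ($a \geq 2$), I would compute the telescoping difference
\[
    N_{a, n}(t) - N_{a-1, n}(t) \ =\ \#\bigl\{x \in \mathbb{Z}^n_{\geq 0} : \textstyle\sum_i x_i = 2t,\ \sum_{i < a} x_i \leq t,\ \sum_{i \leq a} x_i \geq t + 1 \bigr\},
\]
parametrize lattice points in this set by $A := \sum_{i < a} x_i$ and $C := \sum_{i > a} x_i$ (so that $x_a = 2t - A - C$ is determined), and apply the hockey-stick identity twice to obtain $\binom{t + n - a - 1}{n - a}\binom{t + a - 1}{a - 1}$. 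This is precisely $P_{a, n}(t) - P_{a - 1, n}(t)$, closing the induction.

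Finally, the bound $s \geq 3$ is a short matroid-theoretic observation: $s = 1$ would force the matroid to have rank $\leq 1$, while $s = 2$ would make $M \cong U_{1, a_1} \oplus U_{1, a_2}$, which is disconnected. The main obstacle I anticipate is the identification $N_{a, n}(t) = P_{a, n}(t)$, because the natural Vandermonde-type expression $\sum_{\ell = t + 1}^{2t} \binom{\ell + a - 1}{a - 1}\binom{2t - \ell + n - a - 1}{n - a - 1}$ for this count does not obviously simplify to $P_{a, n}(t)$, and I do not see a direct bijective proof; the inductive argument above is the cleanest route I can find.
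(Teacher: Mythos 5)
Your proof is correct, and while it rests on the same two pillars as the paper's argument -- the hyperplane description of $\mathscr{P}(M)$ for a loopless rank~$2$ matroid and the observation that the disjointness of the hyperplanes forbids two constraints $\sum_{i\in H_j}x_i\le t$ from being violated simultaneously (the paper isolates this as Lemma~\ref{lem:seperation}) -- your bookkeeping is genuinely different. The paper works inside the hypersimplex $\Delta_{2,n}$: it decomposes each violating region $\widetilde{\mathscr{R}}_{\ell,n}$ into half-open pieces $\widetilde{\mathscr{Q}}_{k,n}$, computes each piece by a balls-in-boxes count (Proposition~\ref{prop:ehrQ}), and then needs Katzman's formula for $\ehr(\Delta_{2,n},t)$ (Corollary~\ref{cor:EhrHypersimplex}, proved via Chu--Vandermonde) before the correction terms $\ell\binom{t+n-2}{n-1}$ cancel against the $-n\binom{t+n-2}{n-1}$ in that formula. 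You instead enlarge the ambient set to all of $\{x\in\ZZ^n_{\ge 0}:\sum_i x_i=2t\}$, whose cardinality is the single binomial coefficient $\binom{2t+n-1}{n-1}$, after noting that the box constraints $x_i\le 1$ are implied by the hyperplane constraints (since every element lies in some hyperplane -- worth stating explicitly, but immediate). This makes all correction terms disappear from the outset and replaces the half-open polytope decomposition with the telescoping identity $N_{a,n}-N_{a-1,n}=\binom{t+n-a-1}{n-a}\binom{t+a-1}{a-1}$, which your parametrization by $A=\sum_{i<a}x_i$ and $C=\sum_{i>a}x_i$ plus two applications of the hockey-stick identity verifies correctly (the constraint $x_a\ge 0$ is automatic since $A\le t$ and $C\le t-1$). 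Your route is shorter and avoids Chu--Vandermonde entirely; what the paper's route buys in exchange is that the intermediate objects $\mathscr{Q}_{k,n}$, $\widetilde{\mathscr{Q}}_{k,n}$ and the hypersimplex decomposition are reused elsewhere (Corollary~\ref{cor:ehrMinimal} and the independent derivation of Katzman's formula). Your concluding argument for $s\ge 3$ matches the paper's. The worry you flag at the end about the Vandermonde-type expression is moot: your induction already closes the identification $N_{a,n}=P_{a,n}$ cleanly.
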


A paving matroid is a matroid with the property that all of it subsets of cardinality $\rk(M)-1$ are independent. Our proof relies on the fact that all loopless matroids of rank $2$ on $n$ elements are paving (see Lemma~\ref{lem:seperation}). Theorem~\ref{thm:main1} then allows us to prove Ehrhart positivity of all matroid polytopes of rank $2$. Moreover, we are able to show that the Ehrhart polynomials of connected matroids of rank 2 are coefficient-wise bounded by the matroid polytope of the minimal matroid and the uniform matroid.

For polynomials $p(t), q(t)\in \mathbb{R}[t]$ we write $p(t)\preceq q(t)$ if $q(t)-p(t)$ has only nonnegative coefficients. Let $U_{2,n}$ denote the uniform matroid and $T_{2,n}$ the minimal matroid of rank $2$. With these notations, we prove the following.

\begin{theorem}\label{thm:main}
Let $M$ be a connected matroid of rank $2$ on $n$ elements. Then
    \[
    \ehr(\mathscr{P}(T_{2,n}),t)\ \preceq\ \ehr(\mathscr{P}(M),t)\ \preceq\ \ehr(\mathscr{P}(U_{2,n}),t) \, .
    \]
    Moreover, the inequalities are strict on the coefficients of positive degree whenever the matroid $M$ is neither minimal nor uniform.
    In particular, all matroid base polytopes of rank $2$ matroids are Ehrhart positive.
\end{theorem}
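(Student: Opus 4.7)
The plan is to work entirely at the level of Theorem~\ref{thm:main1}. Writing $\ehr(\mathscr{P}(M),t) = \binom{2t+n-1}{n-1} - \sum_{i=1}^{s} P_{a_i,n}(t)$, and using that a connected loopless rank-$2$ matroid on $[n]$ is determined up to isomorphism by its partition $(a_1,\ldots,a_s)$ of $n$ into parallel classes (with $s \geq 3$ by connectivity), the sandwich in the theorem translates into the coefficient-wise inequalities
\[
n\,P_{1,n}(t) \ \preceq \ \sum_{i=1}^{s} P_{a_i,n}(t) \ \preceq \ P_{n-2,n}(t) + 2 P_{1,n}(t),
\]
since $U_{2,n}$ corresponds to $(1,\ldots,1)$ and $T_{2,n}$ to $(n-2,1,1)$, with strictness on positive-degree coefficients translating accordingly.

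The key technical lemma I would prove first is the \emph{superadditivity} of $P_{\bullet,n}$:
\[
(\star) \qquad P_{a+b,n}(t) \ \succeq \ P_{a,n}(t) + P_{b,n}(t), \qquad a,b\geq 1,\ a+b\leq n,
\]
with strict dominance on positive-degree coefficients. I would attack $(\star)$ through the telescoping identity $P_{a,n}(t) - P_{a-1,n}(t) = \binom{t+n-a-1}{n-a}\binom{t+a-1}{a-1}$, whose right-hand side manifestly has only nonnegative coefficients in $t$, together with the Vandermonde reformulation $P_{a,n}(t) = \sum_{m=0}^{t+a-1}\binom{m}{t}\binom{2t+n-2-m}{t-1}$ obtained by reindexing, which exhibits $P_{a,n}(t)$ as a partial sum whose completion is $\binom{2t+n-1}{n-1}$. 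The upper bound then follows by iterating $(\star)$: one gets $P_{a_i,n}(t) \succeq a_i P_{1,n}(t)$, so summing over $i$ and using $\sum_i a_i = n$ gives $\sum_i P_{a_i,n}(t) \succeq n\,P_{1,n}(t)$, with strict excess on positive-degree coefficients whenever some $a_i\geq 2$, that is whenever $M \neq U_{2,n}$.

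For the lower bound, I would exhibit two admissible moves on partitions $\lambda=(a_1,\ldots,a_s)$ of $n$ with $s\geq 3$, each preserving the constraint $s\geq 3$ and each weakly increasing $\sum_i P_{a_i,n}(t)$: \emph{(M1)} if $\lambda$ has two parts $b,c \geq 2$, replace them by $(b+c-1,\,1)$; \emph{(M2)} if $\lambda$ has at least three singleton parts, merge one singleton into some other part of size $a$. Move (M2) is an instance of $(\star)$, while move (M1) requires the stronger inequality $P_{b+c-1,n}(t) + P_{1,n}(t) \succeq P_{b,n}(t) + P_{c,n}(t)$. Iterating (M1) turns any partition into one of the form $(n-k,1^{k-1})$, after which iterating (M2) reduces it to $(n-2,1,1)$, so $\sum_i P_{a_i,n}(t) \preceq P_{n-2,n}(t) + 2 P_{1,n}(t)$. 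Ehrhart positivity then follows at once from the closed form $\ehr(\mathscr{P}(T_{2,n}),t) = \binom{t+n-1}{n-1} + (n-3)\binom{t+n-2}{n-1}$, obtainable from $(\star)$ and $P_{n,n}(t)=\binom{2t+n-1}{n-1}$, whose coefficients are manifestly nonnegative.

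The main obstacle is the promotion inequality needed for (M1). Via the telescoping identity it reduces to $\sum_{k=c+1}^{b+c-1} g(k) \succeq \sum_{k=2}^{b} g(k)$, where $g(k) := \binom{t+n-k-1}{n-k}\binom{t+k-1}{k-1}$, a comparison of two length-$(b-1)$ windows of the $g$-sequence shifted by $c-1$. Since individual $g(k)$-values are \emph{not} coefficient-wise monotone in $k$ (monotonicity fails once $k$ exceeds roughly $n/2$), the inequality cannot be proved term-by-term, and cancellation across the full window has to be exploited, for instance by passing to the Vandermonde form and comparing the two resulting intervals of summation in $m$ directly.
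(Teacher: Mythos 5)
Your overall architecture coincides with the paper's: both reduce, via Theorem~\ref{thm:main1}, to the sandwich $n\,P_{1,n}\preceq \sum_i P_{a_i,n}\preceq 2P_{1,n}+P_{n-2,n}$, prove one side by superadditivity of $a\mapsto P_{a,n}$ and the other by unbalancing moves on the partition $(a_1,\ldots,a_s)$. The gap is that neither of your two technical inequalities --- superadditivity $(\star)$ and the promotion inequality behind (M1) --- is actually established, and you say so yourself for (M1). The telescoping identity only gives $P_{a,n}\preceq P_{a+1,n}$; superadditivity is itself a window comparison $\sum_{k=b+1}^{a+b} g(k)\succeq \sum_{k=1}^{a} g(k)$ for your $g(k)=\binom{t+n-k-1}{n-k}\binom{t+k-1}{k-1}$, of exactly the kind you flag as the main obstacle, and the order-preserving pairing $g(k)\preceq g(k+b)$ fails there too: for $n=10$, $a=b=5$, the pair $g(5)\preceq g(10)$ is false, since $g(10)=\binom{t+9}{9}$ has leading coefficient $1/362880$ while $g(5)=\binom{t+4}{5}\binom{t+4}{4}$ has leading coefficient $1/2880$. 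Your proposed escape route --- passing to $P_{a,n}(t)=\sum_{m=t}^{t+a-1}\binom{m}{t}\binom{2t+n-2-m}{t-1}$ and comparing intervals of summation in $m$ --- cannot deliver what is needed: the summation limits depend on $t$, so at best one gets pointwise inequalities for integers $t\geq 0$, and pointwise dominance does not imply the coefficient-wise dominance $\preceq$ asserted in the theorem (compare $t^2$ and $t$).

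The missing idea, which is the heart of the paper's Proposition~\ref{prop:superadditivity}, is to compare the windows term by term but with the \emph{order-reversing} pairing; concretely, to prove the single transfer inequality $P_{a,n}+P_{b,n}\preceq P_{a-1,n}+P_{b+1,n}$ for $1\leq a\leq b$ and $a+b\leq n$, which is exactly $g(a)\preceq g(b+1)$ under that constraint. Iterating this one inequality yields both $(\star)$ (drive $a$ down to $0$) and your move (M1) (drive the smaller part down to $1$), and the constraint $a+b\leq n$ is preserved throughout since the two indices keep the same sum. The paper proves $g(a)\preceq g(b+1)$ by cancelling the common factor $\binom{t+n-b-2}{n-b-1}\binom{t+a-1}{a-1}$ and comparing the remaining products of linear factors one at a time. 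So your assertion that ``the inequality cannot be proved term-by-term'' is true only for the pairing you chose; reversing one window makes every individual comparison valid and collapses the whole theorem to a short computation with linear factors. (Two smaller points: your target partition should read $(n-k,1^k)$ rather than $(n-k,1^{k-1})$, and the final ``in particular'' for all rank $2$ matroids also needs the disconnected case, handled in the paper by Remark~\ref{rem:EhrPositivity}.)
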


This result proves the aformentioned conjecture of De Loera et al. ~\cite{DeLoera} and the strengthening of that conjecture due to the first author~\cite{ferroni2020ehrhart} for matroids of rank $2$. The key to prove Theorem~\ref{thm:main} is the superadditivity of the polynomials $P_{a_i,n}(t)$ that is provided by Proposition~\ref{prop:superadditivity}.

A further conjecture by De Loera et al.~\cite{DeLoera} concerns the $h^*$-polynomial of matroid polytopes. The $h^*$-polynomial of a lattice polytope is a fundamental tool in Ehrhart theory as it encodes the Ehrhart polynomial in a certain basis with advantageous properties. Given a lattice polytope $\mathscr{P}$ of dimension $d$, the \textbf{$h^*$-polynomial} $h^*(\mathscr{P},x)$ of $\mathscr{P}$ is defined as the numerator polynomial of the generating function
\begin{equation}
    \sum_{j=0}^{\infty} \ehr(\mathscr{P},j)\, x^j\ =\ \frac{h^*(\mathscr{P},x)}{(1-x)^{d+1}}\enspace .
\end{equation}
It can be seen that $h^*(\mathscr{P},x)$ is a polynomial of degree at most $d$ with integer coefficients. 
A foundational result by Stanley~\cite{decompositionsStanley} establishes  that, in contrast to the coefficients of the Ehrhart polynomial, the coefficients of the $h^*$-polynomial are always nonnegative integers. Since then, inequalities amongst the coefficients have been an intensively studied topic ~\cite{PropertyH,PropertyS,Stapledon}. Of particular interest are classes of polytopes for which the $h^*$-polynomial $h^*(\mathscr{P},x)=h_0+h_1x+\cdots +h_dx^d$ has \textbf{unimodal} coefficients, that is
\[
h_0\leq \cdots \leq h_{k-1} \leq h_k \geq h_{k+1} \geq \cdots \geq h_{d}
\]
for some $0\leq k\leq d$. Unimodality and related notions are a fundamental, intensively studied topic not only in Ehrhart theory but more general in
geometric combinatorics as displayed, for example, by groundbreaking work by Adiprasito, Huh and Katz~\cite{AdiprasitoHuhKatz} proving the Heron-Rota-Welsh conjecture and Br\"anden and Huh~\cite{BrandenHuh} on Lorentzian polynomials leading to a proof of the strongest of Mason's conjectures. For further reading see also the surveys~\cite{Braun,BrandenUnimodality,BrentiUnimodality,StanleyUnimodality}.

In~\cite{DeLoera} De Loera et al. provide a closed formula for the $h^*$-polynomial of hypersimplices, which are precisely the matroid polytopes of uniform matroids. From this formula they derive that the $h^*$-polynomial of the second hypersimplex has unimodal coefficients. Based on further computational experiments they pose the intriguing conjecture that all matroid polytopes have $h^*$-polynomials with unimodal coefficients. One property of polynomials that implies unimodality of its coefficients is \textbf{real-rootedness}, that is, if the polynomial possesses only real zeros. Even more, if a polynomial $h_0+h_1x+\cdots +h_dx^d$ has only real roots then its coefficients form a \textbf{log-concave} sequence, that is, $h_i^2\geq h_{i-1}h_{i+1}$ for all $0 < i < d$. Hence, one way of showing that a sequence is unimodal or log-concave is to show that it is the sequence of  coefficients of a real-rooted polynomial. See, e.g., \cite{Braenden:2015} for more on real-rootedness and related properties. In
~\cite{ferroni2020ehrhart} it was shown that matroid polytopes of minimal matroids have real-rooted $h^*$-polynomials and, supported by further experiments, it was conjectured that all matroid polytopes have real-rooted $h^*$-polynomials. Towards this conjecture we prove that all matroid polytopes of all sparse paving matroids of rank $2$, which include the second hypersimplex, have real-rooted $h^*$-polynomials.
\begin{theorem}\label{thm:main3} 
    Let $M$ be a sparse paving matroid of rank $2$. Then $h^*(\mathscr{P}(M),x)$ is either $1$, $1+x$, or a real-rooted polynomial of degree $\big\lfloor\frac{n}{2}\big\rfloor$ with positive coefficients.
\end{theorem}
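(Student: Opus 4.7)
The plan is to pass through an explicit closed form for $h^*(\mathscr{P}(M),x)$ via Theorem~\ref{thm:main1}, and then to prove real-rootedness by embedding the polynomial into a one-parameter affine family whose endpoints admit a common interlacer.

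A rank-$2$ matroid is sparse paving if and only if every parallel class has size $1$ or $2$; if $M$ is disconnected and sparse paving of rank $2$, this structural condition forces $n \leq 4$, and the finitely many resulting matroid polytopes can be inspected directly to yield $h^*(\mathscr{P}(M),x) \in \{1,\, 1+x\}$. So assume $M$ is connected, and let $d$ be the number of parallel pairs, so that $M$ has $s = n - d$ parallel classes in total ($d$ of size $2$ and $n - 2d$ singletons) with $s \geq 3$. Substituting $P_{1,n}(t) = \binom{t+n-2}{n-1}$ and $P_{2,n}(t) = \binom{t+n-2}{n-1} + (t+1)\binom{t+n-3}{n-2}$ into Theorem~\ref{thm:main1}, and passing to the $h^*$-generating function via the identity $(1-x)^n \sum_{t \geq 0} \binom{2t+n-1}{n-1}\, x^t = \tfrac{1}{2}\bigl[(1+\sqrt{x})^n + (1-\sqrt{x})^n\bigr]$, I obtain
\[
h^*(\mathscr{P}(M),x) \;=\; g_n(x) \;-\; (n+d)\,x \;-\; d(n-3)\,x^2,
\]
where $g_n(x) := \sum_{k=0}^{\lfloor n/2 \rfloor} \binom{n}{2k}\, x^k$ is a classical real-rooted polynomial with zeros $-\tan^2\bigl(\tfrac{(2k-1)\pi}{2n}\bigr)$ for $k = 1, \ldots, \lfloor n/2 \rfloor$. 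Outside the degenerate cases $n = 3$ (giving $h^* = 1$) and $(n,d) = (4,1)$ (giving $h^* = 1+x$), the leading coefficient $\binom{n}{2\lfloor n/2 \rfloor}$ is strictly positive and independent of $d$, so $\deg h^*(\mathscr{P}(M),x) = \lfloor n/2 \rfloor$; positivity of the remaining coefficients follows from the constraint $d \leq D := \min(\lfloor n/2 \rfloor,\, n-3)$, since only the $x$- and $x^2$-coefficients depend on $d$.

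The key remaining task is to show that $p_d(x) := h^*(\mathscr{P}(M),x)$ is real-rooted for every integer $d \in [0, D]$. The family is affine in $d$, namely $p_d = p_0 - d \cdot x(1+(n-3)x)$, so every $p_d$ equals the convex combination $(1 - d/D)\, p_0 + (d/D)\, p_D$ of the extremal polynomials $p_0 = h^*(\mathscr{P}(U_{2,n}),x)$ and $p_D$. I would establish that $p_0$ and $p_D$ are both real-rooted and admit a common interlacer; the standard principle that convex combinations of real-rooted polynomials with a common interlacer remain real-rooted then yields real-rootedness of every $p_d$. The main obstacle will be verifying this common-interlacer property: one route is to exploit the explicit $\tan^2$-description of the roots of $g_n$ to locate the zeros of $p_0$ and $p_D$ and exhibit the interlacer directly; an alternative, structural route uses the half-open decomposition of $\mathscr{P}(U_{2,n})$ into $\mathscr{P}(M)$ and $d$ cut-off pyramids (one per parallel pair), each having half-open $h^*$-polynomial $x(1+(n-3)x)$, a geometric picture that explains the linear dependence on $d$ and may yield a combinatorial proof of the interlacing.
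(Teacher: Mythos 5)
Your derivation of the closed form $h^*(\mathscr{P}(M),x)=\sum_{j}\binom{n}{2j}x^j-(n+d)x-d(n-3)x^2$ is correct and coincides with the paper's Corollary~\ref{h_star_for_sparse_paving} (your $d$ is the paper's $\lambda$), and your treatment of the disconnected and degenerate cases, the degree, and the positivity of the coefficients is sound. The problem is the real-rootedness step, which is the entire difficulty of the theorem, and which your proposal does not actually prove. Writing $p_d=(1-d/D)\,p_0+(d/D)\,p_D$ and invoking the fact that convex combinations of real-rooted polynomials with a common interlacer are real-rooted is a legitimate reduction, but you then say only that you ``would establish'' that $p_0$ and $p_D$ are real-rooted and admit a common interlacer. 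Neither claim can be cited: $p_0=h^*(\Delta_{2,n},x)$ is the $h^*$-polynomial of the second hypersimplex, whose real-rootedness was precisely an open strengthening of De~Loera--Haws--K\"oppe and is one of the new results of this paper. Moreover, by the Chudnovsky--Seymour/Dedieu criterion, the existence of a common interlacer for $p_0$ and $p_D$ is \emph{equivalent} to real-rootedness of all convex combinations, so your reduction has not made the problem easier --- it has repackaged it.

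For contrast, the paper carries out exactly the hard step you defer: it substitutes $x=-\tan^2\theta$, uses the identity $\sum_j\binom{n}{2j}(-\tan^2\theta)^j=\cos(n\theta)/\cos^n\theta$ (your Lemma~\ref{lem:transHstar}), and proves the sharp bounds $\lvert y^{n/2}p^*_{a,n}((y-1)/y)\rvert<a/n$ for $a\in\{1,2\}$ (Lemma~\ref{lem:bounds}); these bounds are delicate --- the sum $(n-2\lambda)\cdot\frac{1}{n}+\lambda\cdot\frac{2}{n}$ equals exactly $1$, which is the precise threshold at which the sign of $q(\theta_j)$ at $\theta_j=j\pi/n$ is still forced, and the $a=2$ case requires a critical-point analysis plus computer verification for $9\le n<27$. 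The Intermediate Value Theorem then produces $\lfloor n/2\rfloor$ distinct negative roots. If you want to pursue your interlacing route, you would still need an argument of comparable analytic force to locate the roots of $p_0$ and $p_D$ relative to the roots $-\tan^2\bigl(\tfrac{(2k-1)\pi}{2n}\bigr)$ of $g_n$ (which are a natural candidate for the common interlacer); as it stands, the proposal is a plan rather than a proof.
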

In particular, it follows that the sequence of coefficients of the $h^*$-polynomials for sparse paving matroids is unimodal and log-concave. Our proof is based on the closed formula for the $h^*$-polynomial for all rank $2$ matroids that we provide in Proposition~\ref{main_but_for_h_star} in conjunction with a careful analytic discussion of this formula in case of sparse paving matroids.\\

\noindent \textbf{Outline:} We begin by collecting preliminaries on matroids, their base polytopes and Ehrhart theory in Section~\ref{sec:prelim}. Section~\ref{sec:EhrhartPoly} is dedicated to the proof of Theorem~\ref{thm:main1}. Section~\ref{sec:Positivity} is concerned with the proof of Theorem~\ref{thm:main}. In Section~\ref{sec:h_star} we adress the proof of Theorem~\ref{thm:main3}.
We close this article with open questions in Section~\ref{sec:Outlook}. 

\section{Preliminaries}\label{sec:prelim}
\noindent In this section we collect basic preliminaries on Ehrhart theory, matroids and their polytopes. We restrict ourselves to introduce only the most relevant terms and the polyhedral point of view on matroids. The focus in this article lies on the special case of rank $2$ matroids. 

\subsection{Matroids} Matroids have been developed by Whitney
\cite{Whitney:1935} in 1935 and independently by Nakasawa, see \cite{Nakasawa:2009}.
They generalize the concept of independence and dependence in graphs, linear vector spaces and algebraic extensions. For further reading on matroids we recommend the monographs by Oxley \cite{Oxley} and White \cite{White:1986}. For a polyhedral point of view on the topic we recommend the book \cite{Schrijver:2003:B} by Schrijver.

There are many equivalent ways to define a matroid, see the appendix of \cite{White:1986} for an overview of those cryptomorphisms. Let $k$ be a nonnegative integer and $E$ be a finite set. A nonempty collection $\mathscr{B}$ of $k$-subsets of $E$ defines a \textbf{matroid} $M$ of \textbf{rank} $k$, denoted by $\rk(M)$, on the \textbf{ground set} $E$ with set of  \textbf{bases} $\mathscr{B}$ whenever the following exchange property is satisfied:

\begin{center}
    For all $B_1,B_2\in\mathscr{B}$ and $i\in B_1$ exists $j\in B_2$ such that $(B_1\setminus \{i\} )\cup \{j\}\in\mathscr{B}$. 
\end{center}

The \textbf{rank} of a set $S\subseteq E$ in $M$, denoted $\rk(S)$, is the maximal size of a set $S\cap B$ where $B$ ranges over the family $\mathscr{B}$ of all bases of $M$. A \textbf{flat} of $M$ is a subset $F\subseteq E$ such that for each element $e\in E\setminus F$ the rank of $F\cup\{e\}$ is strictly larger than the rank of~$F$. The flats of rank $\rk(M) -1$ are called \textbf{(matroid) hyperplanes}. A subset $I\subseteq E$ is an \textbf{independent set} if it is contained in some $B\in\mathscr{B}$, otherwise it is a \textbf{dependent set}. A \textbf{loop} is an element of $E$ that is contained in no basis. If $e$ and $f$ are not loops, we say that they are \textbf{parallel} if the set $\{e,f\}$ is dependent.
We call a matroid of rank $k$ \textbf{paving} if all of its subsets of size $k-1$ are independent. In particular, rank $2$ matroids without loops are paving. A matroid is called \textbf{sparse paving} if additionally every matroid hyperplane is at most of size $k$.

Let $M_1$ be a matroid with ground set $E_1$ and set of bases $\mathscr{B}_1$ and let $M_2$ be a matroid on $E_2$ with bases $\mathscr{B}_2$. If the sets $E_1$ and $E_2$ are disjoint then the collection
\[
    \mathscr{B}:=\{B_1\sqcup B_2 : B_1\in \mathscr{B}_1 \text{ and } B_2\in \mathscr{B}_2\}
\]
is the family of bases of the matroid $M_1\oplus M_2$ which is called the \textbf{direct sum} of $M_1$ and $M_2$. A matroid is \textbf{disconnected} if it is a direct sum of matroids with smaller cardinality and \textbf{connected} otherwise. The rank of a direct sum is the sum of the ranks of the summands.

\begin{example}
    The maximum number of bases that a rank $k$ matroid on $n$ elements can have is $\binom{n}{k}$. This bound is achieved whenever each $k$-set is a basis. The corresponding matroid is called the \textbf{uniform matroid}, denoted $U_{k,n}$.
\end{example}

\begin{example}\label{ex:minimalMatroid}
    The minimum number of bases of a connected matroid on $n$ elements of rank $k$ is $k\cdot (n-k)+1$. Being connected requires that $n=1$ whenever $k=0$ or $n=k$. The \textbf{minimal matroid} $T_{k,n}$ is the unique matroid up to isomorphisms achieving this minimum, see \cite{Dinolt:1971} or \cite{Murty:1971}. Let $S=\{1,\ldots,k\}$; the collection formed by the sets $(S\setminus\{i\})\cup\{j\}$ where $i\in S$ and $j\in E\setminus S$ together with the set $S$ is the collection of bases of $T_{k,n}$. In particular, the elements $k+1,\ldots,n$ are parallel and form a flat of rank one.
\end{example}

\subsection{Matroid polytopes}
\noindent The convex hull of all indicator vectors of the bases of $M$ form the \textbf{matroid (base) polytope}:
    \[ \mathscr{P}(M) := \operatorname{conv}\{e_B : B\in \mathscr{B}\}\enspace \]
where $e_B := \sum_{i\in B} e_i$ is the indicator vector of the basis $B\in \mathscr{B}$.
For any matroid $M$ on $n$ elements, the matroid polytope $\mathscr{P}(M)$ has the following outer description:
\begin{equation}\label{ineq:flats}
\mathscr{P}(M)\ =\ \left\{ x\in [0,1]^n : \sum_{i\in F} x_i \leq \rk(F) \text{ for all flats $F$ of } M \text{ and } \sum _{i=1}^n x_i=\rk (M) \right\} .
\end{equation}

Notice that the polytope $\mathscr{P}(M)$ is of dimension at most $n-1$, as it lies on the hyperplane $\sum_{i=1}^n x_i = \rk(M)$. Furthermore,
the dimension of the polytope $\mathscr{P}(M)$ equals to $n-1$ if and only if the matroid is connected; see \cite{Fujishige:1984} or \cite{feichtner-sturmfels}.

\begin{example}
The matroid polytope of the uniform matroid $U_{k,n}$ is the \textbf{$k$-th hypersimplex}
\[
\Delta_{k,n} := \mathscr{P}(U_{k,n}) = 
\left\{ x\in[0,1]^n : \sum_{i=1}^n x_i = k \right\} \enspace .
\]
This is a point if $k=0$ or $k=n$ and a unimodular simplex whenever $k=1$ or $k=n-1$.
\end{example}

\begin{example} 
The matroid polytope of the minimal matroid $T_{k,n}$ is given by 
\[
\mathscr{P}(T_{k,n})\ =\ \left\{ x\in\Delta_{k,n} : \sum_{i=k+1}^n x_i \leq 1 \right\} \enspace ,
\]
see \cite[Proposition 2.6]{ferroni2020ehrhart}.
\end{example}

If the matroid $M$ is a direct sum $M_1\oplus M_2$ then its polytope $\mathscr{P}(M)$ equals the product $\mathscr{P}(M_1)\times \mathscr{P}(M_2)$ of the matroid polytopes $\mathscr{P}(M_1)$ and $\mathscr{P}(M_2)$.

We will now focus on the case of rank $2$ matroids.
\begin{example}
The matroid polytopes $\mathscr{P}(U_{0,1})=\Delta_{0,1}$ and $\mathscr{P}(U_{1,1})=\Delta_{1,1}$ are points. 
Thus the matroid polytope of the directed sum $M\oplus U_{0,1}$ or $M\oplus U_{1,1}$ is a unimodular equivalent embedding of the polytope $\mathscr{P}(M)$ in a higher dimensional space.
\end{example}

Note that if a matroid $M$ has a loop, then $M$ is a direct sum $M=M'\oplus U_{0,1}$. 
As loops do not change the matroid polytope, only their embedding, we may assume from now on that all matroids that we consider are loopless. We benefit of the following fact.

\begin{lemma}\label{lem:disconnected} Let $M$ be a matroid of rank $2$ with no loops. Then $M$ is either connected or a direct sum of two uniform matroids of rank one. In particular, the matroid polytope of the latter is a product of two simplices. 
\end{lemma}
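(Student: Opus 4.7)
The plan is to analyze the direct-sum decomposition case-by-case using additivity of rank. Suppose $M$ is disconnected, so $M = M_1 \oplus M_2$ with nonempty ground sets $E_1, E_2$. Since rank is additive under direct sums, $\rk(M_1) + \rk(M_2) = \rk(M) = 2$, which leaves only the three possibilities $(\rk(M_1),\rk(M_2)) \in \{(0,2),(1,1),(2,0)\}$.

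Next I would rule out the cases where one of the summands has rank zero. A matroid of rank $0$ has only the empty set as its basis, so every element of its ground set is a loop. Hence if either $\rk(M_i)=0$, then every element of $E_i$ would be a loop of $M_i$ and therefore of $M$, contradicting the loopless hypothesis. This leaves $(\rk(M_1),\rk(M_2)) = (1,1)$ as the only possibility.

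Then I would argue that each summand must be uniform. A loopless rank-$1$ matroid on $E_i$ has every singleton of $E_i$ as a basis (because every non-loop element extends $\emptyset$ to a basis, and there are no loops), so $M_i \cong U_{1,|E_i|}$ for $i=1,2$. This establishes the first assertion.

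Finally, for the ``in particular'' statement I would invoke the earlier observation that $\mathscr{P}(M_1 \oplus M_2) = \mathscr{P}(M_1) \times \mathscr{P}(M_2)$ together with the fact that $\mathscr{P}(U_{1,k}) = \Delta_{1,k}$ is a unimodular $(k-1)$-simplex, which immediately gives a product of two simplices. There is no real obstacle here: the proof is essentially a bookkeeping argument; the only point worth being careful about is the characterization of loopless rank-$1$ matroids as uniform, which follows directly from the definitions of basis and loop.
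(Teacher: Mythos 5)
Your proof is correct; the paper actually states Lemma~\ref{lem:disconnected} without proof, and your argument (rank additivity under direct sums, ruling out rank-zero summands via the loopless hypothesis, and the observation that a loopless rank-one matroid is uniform) is exactly the standard reasoning the authors implicitly rely on. No gaps: every ingredient you use, including $\mathscr{P}(M_1\oplus M_2)=\mathscr{P}(M_1)\times\mathscr{P}(M_2)$, is available in the paper's preliminaries.
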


The flats of a rank $2$ matroid $M$ are the set of all loops, the hyperplanes and the ground set.
If $M$ is loopless or connected, then the set of loops is empty. Neither the empty set nor the ground set impose a facet defining inequality in the description~\eqref{ineq:flats}. Thus we obtain the following simplification of \eqref{ineq:flats} for a loopless matroid of rank 2 on a ground set of size $n$.
\begin{equation}\label{ineq:hyperplanes}
\mathscr{P}(M)\ =\ \left\{ x\in\Delta_{2,n} : \sum_{i\in H} x_i \leq 1 \text{ for all matroid hyperplanes $H$ of } M \right\} \enspace .
\end{equation}

Being paving is a key property of connected rank $2$ matroids. Under such hypothesis on the rank, being paving is equivalent to being loopless. Also, notice that for loopless matroids the set of hyperplanes provides a partition of the ground set. This fact will be used several times in the sequel. One way of capturing such property geometrically is the following Lemma.

\begin{lemma}\label{lem:seperation} Let $M$ be a loopless matroid of rank $2$ and $u\in \Delta_{2,n}\setminus\mathscr{P}(M)$. Then $u$ violates exactly one of the inequalities
\[
\sum_{i\in H} x_i \leq 1
\]
where $H$ is a matroid hyperplane of $M$.
\end{lemma}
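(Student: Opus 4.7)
The plan is to exploit the fact that in a loopless rank~$2$ matroid the hyperplanes are exactly the parallel classes, and hence they form a \emph{partition} of the ground set $E=\{1,\ldots,n\}$. Indeed, a hyperplane is a flat of rank $\rk(M)-1=1$, and in a loopless matroid the rank~$1$ flats partition $E$ since every element $e\in E$ lies in a unique maximal rank~$1$ flat (its parallel class). I would state and justify this briefly at the beginning of the proof, perhaps citing the discussion immediately preceding the lemma where the same property is already noted.

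With this partition property in hand, the argument is a short contradiction. Since $u\in\Delta_{2,n}\setminus\mathscr{P}(M)$, the description~\eqref{ineq:hyperplanes} of $\mathscr{P}(M)$ shows that $u$ violates at least one inequality $\sum_{i\in H}x_i\leq 1$ associated to a hyperplane. Suppose for contradiction that $u$ violates such an inequality for two distinct hyperplanes $H_1$ and $H_2$. By the partition property, $H_1\cap H_2=\emptyset$, and therefore
\[
\sum_{i\in H_1}u_i+\sum_{i\in H_2}u_i \;=\; \sum_{i\in H_1\cup H_2} u_i \;\leq\; \sum_{i=1}^n u_i \;=\; 2,
\]
where the inequality uses $u_i\geq 0$ (which follows from $u\in\Delta_{2,n}\subseteq[0,1]^n$) and the final equality is the condition $\sum_i u_i=\rk(M)=2$. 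On the other hand, both summands on the left are strictly larger than $1$, giving $\sum_{i\in H_1}u_i+\sum_{i\in H_2}u_i>2$, a contradiction. Hence $u$ violates exactly one hyperplane inequality.

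There is no serious obstacle here; the only thing one must be careful about is making the partition statement airtight in the rank~$2$, loopless setting, which is really the content of the previous paragraph in the paper (``for loopless matroids the set of hyperplanes provides a partition of the ground set''). The rest is just the observation that two disjoint subsums of a nonnegative vector summing to $2$ cannot both exceed $1$.
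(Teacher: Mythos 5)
Your proof is correct and follows essentially the same route as the paper's: both argue that distinct hyperplanes of a loopless rank~$2$ matroid are disjoint, so two violated inequalities would force the coordinate sum of $u$ to exceed $2$, contradicting $u\in\Delta_{2,n}$. The only difference is that you spell out the disjointness via parallel classes, whereas the paper simply notes that the intersection of two distinct hyperplanes is the (empty) set of loops.
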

\begin{proof} Clearly $u\in\Delta_{2,n}\setminus\mathscr{P}(M)$ has to violate at least one of the above inequalities. Suppose $u$ satisfies
\[
\sum_{i\in H} u_i > 1 \quad \text{ and } \quad \sum_{i\in G} u_i > 1
\]
where $G$ and $H$ are distinct matroid hyperplanes. The intersection $G\cap H$ is empty as $M$ has no loops. Therefore 
\[
2\ <\ \sum_{i\in H} u_i + \sum_{i\in G} u_i\ \leq\ \sum_{i=1}^n u_i \enspace .
\]
Contradicting that the coordinate sum of $u$ is $2$ whenever $u\in\Delta_{2,n}$.
\end{proof}

In \cite{JoswigSchroeter:2017} Joswig and the third author introduce the class of split matroids which provides the same separation property in arbitrary rank. 
This class strictly contains all paving matroids and thus include the loopless matroids of rank $2$. 
Moreover, that article contains further details about matroid polytopes and their facets.

\subsection{Ehrhart theory}
\noindent In 1962 Ehrhart \cite{Ehrhart} initiated the study of lattice-point enumeration in dilations of lattice polytopes with the following foundational result.

\begin{theorem}[Ehrhart's Theorem]\label{thm:Ehrhart}
    Let $\mathscr{P}\subseteq \mathbb{R}^n$ be a lattice polytope of dimension $d$. There is a polynomial $\ehr(\mathscr{P},t)$ in the variable $t$ of degree $d$ such that the number of lattice points in the $t$-th dilate $t\,\mathscr{P}=\{t\, p\, \colon p\in \mathscr{P}\}$ satisfies
    \[
    \ehr(\mathscr{P}, t)\ =\ \#(t\mathscr{P}\cap\mathbb{Z}^n)
    \]
    for all integers $t\geq 0$.
\end{theorem}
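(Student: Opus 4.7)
The plan is to prove Ehrhart's theorem by reducing to the case of a lattice simplex via triangulation and then extracting polynomiality from the standard cone-over-the-polytope construction.

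First I would fix a triangulation of $\mathscr{P}$ into $d$-dimensional lattice simplices $\Delta_1,\ldots,\Delta_r$ with vertices in $\mathscr{P}\cap\mathbb{Z}^n$ (obtainable, e.g., by a pulling triangulation of a lattice-point enumeration of $\mathscr{P}$). Using a half-open decomposition of this triangulation, the counting function $\#(t\mathscr{P}\cap\mathbb{Z}^n)$ becomes an integer linear combination of analogous functions attached to half-open simplex pieces, so that every lattice point of $t\mathscr{P}$ is counted in exactly one piece. Hence it suffices to prove, for any lattice $d$-simplex $\Delta$, that $t\mapsto\#(t\Delta\cap\mathbb{Z}^n)$ agrees with a polynomial in $t$ of degree $d$ for all integers $t\geq 0$.

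For the simplex case, let $\Delta$ have vertices $v_0,\ldots,v_d\in\mathbb{Z}^n$ and form the homogenized cone
\[ C := \operatorname{cone}\{(v_0,1),\ldots,(v_d,1)\}\subseteq \mathbb{R}^{n+1}. \]
Lattice points of $C$ at height $t$ in the last coordinate correspond bijectively to lattice points of $t\Delta$. The key observation is that every lattice point of $C$ admits a unique decomposition as an element of the half-open fundamental parallelepiped
\[ \Pi := \Bigl\{\textstyle\sum_{i=0}^{d} \lambda_i(v_i,1) : 0\leq \lambda_i < 1\Bigr\} \]
plus a $\mathbb{Z}_{\geq 0}$-combination of the rays $(v_i,1)$. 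Summing over heights, this bijection yields the rational generating function identity
\[ \sum_{t\geq 0}\#(t\Delta\cap\mathbb{Z}^n)\,x^t \;=\; \frac{\sum_{p\in \Pi\cap\mathbb{Z}^{n+1}} x^{p_{n+1}}}{(1-x)^{d+1}}, \]
whose numerator is a polynomial in $x$ of degree at most $d$ since $\Pi$ is bounded and its last coordinate lies in $[0,d+1)$. Expanding $(1-x)^{-(d+1)}=\sum_{t\geq 0}\binom{t+d}{d}x^t$ and comparing coefficients of $x^t$ exhibits $\#(t\Delta\cap\mathbb{Z}^n)$ as a polynomial in $t$, whose degree is forced to be $d$ by the leading-order asymptotics given by $\operatorname{vol}(\Delta)\cdot t^d$.

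The main obstacle is the triangulation step: interior faces are shared between simplices, so a naive summation over closed simplices would double-count lattice points on shared faces. The half-open decomposition is precisely the device that tames this, and once that reduction is made, the cone–parallelepiped argument delivers polynomiality and the correct degree for the individual pieces essentially for free.
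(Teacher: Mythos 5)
Your argument is correct and is precisely the standard proof of Ehrhart's theorem (triangulation into lattice simplices, homogenization to a simplicial cone, tiling by the half-open fundamental parallelepiped, and reading off polynomiality from the rational generating function); the paper itself gives no proof but defers to \cite{beck-robins}, which follows exactly this route. The only point worth a word more care is the existence of a half-open decomposition compatible with an arbitrary triangulation (or, alternatively, an inclusion--exclusion over shared faces), which you correctly identify as the crux of the reduction and which is standard but should be cited or briefly justified.
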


The polynomial $\ehr(\mathscr{P},t)$ is called the \textbf{Ehrhart polynomial} of $\mathscr{P}$. For a proof of Ehrhart's theorem and further reading on integer point enumeration we refer to \cite{beck-robins}.

A subset $\widetilde{\mathscr{P}}\subseteq\mathbb{R}^n$ obtained from a convex polytope $\mathscr{P}\subseteq \mathbb{R}^n$ by removing some of its facets is called a \textbf{half-open polytope}. We note that Ehrhart's Theorem~\ref{thm:Ehrhart} naturally extends to half-open lattice polytopes via the inclusion-exclusion principle. That is, the number of lattice points in positive integer dilations of a half-open lattice polytope $\widetilde{\mathscr{P}}$ is also given by a polynomial in the variable $t$.

If the Ehrhart polynomial $\ehr(\mathscr{P},t)$ has positive coefficients, we say that the polytope~$\mathscr{P}$ is \textbf{Ehrhart positive}. Moreover, we define the following partial order $\preceq$ on the ring of polynomials $\RR[t]$. The polynomial $p(t) = \sum_{j=0}^d a_j t^j$ is said to be \textbf{nonnegative} if all its coefficients are nonnegative, that is, $a_j\geq 0$ for all $j\geq 0$. In this case, we write $p(t) \succeq 0$. Furthermore, we write $p(t)\succeq q(t)$ whenever $p(t)-q(t)\succeq 0$. We say that the inequality is \textbf{strict on the coefficients of positive degree} if $p(t)-q(t)$ has only positive coefficients, except for possibly the constant coefficient which might be zero.

We observe that $\preceq$ defines a partial order that is preserved under multiplication with nonnegative polynomials. That is, for $p,q,r\in\RR[t]$ and $r(t)\succeq 0$ 
    \begin{equation}\label{rem:multiplication}
         p(t)\preceq q(t) \implies p(t)\cdot r(t) \preceq q(t)\cdot r(t) \enspace. 
    \end{equation}

\begin{example} The $t$-th dilation of the simplex $\Delta_{1,n}=\mathscr{P}(U_{1,n})$ is
\[
t\,\Delta_{1,n}\ =\ \left\{x\in[0,t]^n : \sum_{i=1}^n x_i = t\right\}
\]
which contains $\binom{t+n-1}{n-1}$ lattice points.
Hence, the Ehrhart polynomial of a loopless matroid of rank $1$ is equal to
\[
\ehr(\mathscr{P}(U_{1,n}),t)\ =\ \binom{t+n-1}{n-1}\ =\ \prod _{i=0}^{n-2}\frac{t+n-1-i}{n-1-i}
\]
which shows that $\mathscr{P}(U_{1,n})$ is Ehrhart positive.

The half-open simplex 
\[
\widetilde{\Delta}_{1,n}\ =\ \left\{x\in[0,1]^n : \sum_{i=1}^n x_i = 1 \text{ and } x_1>0 \right\}
\]
is equal to the set difference $\mathscr{P}(U_{1,n})\setminus \mathscr{P}(U_{1,n-1}\oplus U_{0,1})$. It follows that
\[
\ehr(\widetilde{\Delta}_{1,n})\ = \ \binom{t+n-1}{n-1}-\binom{t+n-2}{n-2}\ =\ \binom{t+n-2}{n-1}\ = \ \prod _{i=0}^{n-2}\frac{t+n-2-i}{n-1-i}\ \succeq\ 0
\]
as $\ehr(\mathscr{P}(U_{1,n-1}\oplus U_{0,1}),t) = \ehr(\mathscr{P}(U_{1,n-1}),t)$.
In particular, we have $\ehr(\mathscr{P}(U_{1,n}),t) \succeq \ehr(\mathscr{P}(U_{1,n-1}),t)$.
\end{example}

\begin{example}\label{ex:ehrDisconnected}
    Lemma~\ref{lem:disconnected} implies that the matroid polytope of a disconnected matroid of rank $2$ without loops is a product of two simplices $\mathscr{P}(U_{1,m}\oplus U_{1,n-m})=\Delta_{1,m}\times\Delta_{1,n-m}$ for some $1\leq m\leq n-1$.
    Its Ehrhart polynomial is therefore given by 
    \[ \ehr(\mathscr{P}(U_{1,m}\oplus U_{1,n-m}),t)\ =\ \binom{t+m-1}{m-1}\binom{t+n-m-1}{n-m-1}\] which is positive, as it is a product of linear factors with positive coefficients.
\end{example}

\begin{rem}\label{rem:EhrPositivity} 
    Example~\ref{ex:ehrDisconnected} shows that base polytopes of disconnected matroids of rank~$2$ without loops are Ehrhart positive. On the other hand, extending a matroid $M$ with $m$ loops, that is, considering the matroid $M\oplus U_{0,m}$, does not change the Ehrhart polynomial. In order to prove Ehrhart positivity of rank $2$ matroids it therefore suffices to consider connected matroids only.
\end{rem}

An important tool to study Ehrhart polynomials further is the $h^*$-polynomial of a lattice polytope. Given a lattice polytope $\mathscr{P}$ of dimension $d$, the $h^*$-polynomial $h^* (\mathscr{P}, x)$ is defined as the numerator of the generating function
\begin{equation}\label{h_star_def}
    \sum_{j=0}^{\infty} \ehr(\mathscr{P},j)\, x^j\ =\ \frac{h^*(\mathscr{P},x)}{(1-x)^{d+1}}\enspace .
\end{equation}
General theory for generating functions implies that $h^* (\mathscr{P}, x)$ is a polynomial of degree at most $d$ (see, e.g.,~\cite[Corollary 4.3.1]{EC1}). Since the evaluations $\ehr(\mathscr{P},j)$ are integers so are the coefficients of the numerator polynomial. Equivalently, $h^* (\mathscr{P}, x)=h_0+h_1x+\cdots +h_dx^d$ is the $h^*$-polynomial of $\mathscr{P}$ if and only if 
        \[ \ehr(\mathscr{P},t)\ =\ h_0\binom{t+d}{d} + h_1\binom{t+d-1}{d} + \ldots + h_{d-1}\binom{t+1}{d} + h_d\binom{t}{d} \enspace .\]
(See, e.g.,~\cite{beck-robins}). That is, the coefficients $h_0,h_1,\ldots, h_d$ of the $h^*$-polynomial encode the Ehrhart polynomial with respect to the basis $\binom{t+d}{d}, \binom{t+d-1}{d}, \ldots \binom{t}{d}$ of the vector space of real polynomials of degree at most~$d$. By a fundamental theorem of Stanley~\cite{decompositionsStanley}, these coefficients are always nonnegative, in contrast to the coefficients of the Ehrhart polynomial.

The following well-known binomial identity will serve us as a useful tool to compute $h^*$-polynomials of matroid polytopes. (For a proof, see, e.g., \cite[Corollary 2]{szekely}.)
\begin{lemma}[Sur\'anyi's Identity]\label{lem:Suranyi} For all natural numbers $r$ and $s$
    \[
        \binom{t+r}{r}\binom{t+s}{s} \ =\ \sum_{j \geq 0} \binom{r}{j} \binom{s}{j} \binom{t+r+s-j}{r+s}
    \]
\end{lemma}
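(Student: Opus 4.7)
The plan is to recast Sur\'anyi's identity as a generating-function identity, and identify it with a known $h^*$-polynomial computation. Both sides are polynomials in $t$ of degree $r+s$, so it suffices to verify agreement for all non-negative integers $t$. Multiplying both sides by $x^t$ and summing over $t\geq 0$, I would use the elementary identity $\sum_{t\geq 0}\binom{t+n}{n}x^t = (1-x)^{-(n+1)}$: applied to the right-hand side after swapping the order of summation and substituting $t \mapsto t-j$ inside the inner sum, it gives
\[
\sum_{t\geq 0} x^t \sum_{j\geq 0} \binom{r}{j}\binom{s}{j}\binom{t+r+s-j}{r+s} \ =\ \frac{\sum_{j\geq 0}\binom{r}{j}\binom{s}{j}x^j}{(1-x)^{r+s+1}}.
\]
Recognizing the left-hand side $\binom{t+r}{r}\binom{t+s}{s}$ as the Ehrhart polynomial of $\mathscr{P}(U_{1,r+1}\oplus U_{1,s+1})$, which is the product of two standard simplices of dimensions $r$ and $s$, and recalling the definition~\eqref{h_star_def} of the $h^*$-polynomial, Sur\'anyi's identity becomes equivalent to
\[
h^*(\mathscr{P}(U_{1,r+1}\oplus U_{1,s+1}),x)\ =\ \sum_{j\geq 0}\binom{r}{j}\binom{s}{j}x^j.
\]

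To establish this $h^*$-polynomial formula, I would invoke the classical staircase triangulation of a product of two simplices, which is a regular unimodular triangulation whose maximal simplices are in bijection with monotone lattice paths from $(0,0)$ to $(r,s)$. Since this triangulation is unimodular, by a theorem of Stanley the $h^*$-polynomial of the product coincides with the $h$-polynomial of the triangulation viewed as an abstract simplicial complex. A standard shelling of the staircase, for example via the lexicographic order on path words, then expresses this $h$-polynomial as the generating polynomial of lattice paths by their number of \textbf{EN}-ascents. A short bijective argument---independently selecting $j$ among the $r$ east-steps and $j$ among the $s$ north-steps to serve as the ascent positions---shows that the number of such paths with exactly $j$ EN-ascents is $\binom{r}{j}\binom{s}{j}$, completing the proof.

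The main obstacle lies in the combinatorial bookkeeping of the second step: matching the shelling descent statistic on the staircase triangulation with the EN-ascent statistic on lattice paths. Should this geometric route prove too delicate, an alternative is to recognize Sur\'anyi's identity as a particular instance of the Pfaff--Saalsch\"utz $_3F_2$ summation theorem, applied with parameters $a=-r$, $b=-s$, $-n=-t$, $c=1$, $d=-t-r-s$, which satisfies the Saalsch\"utz balance condition $c+d+n = a+b+1$; this reduces the identity to a routine verification of a known closed-form hypergeometric sum, bypassing the geometric argument entirely.
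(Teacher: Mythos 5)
Your proposal is correct in substance, but note that the paper does not actually prove Lemma~\ref{lem:Suranyi} at all: it treats Sur\'anyi's identity as a known binomial identity and simply cites \cite[Corollary 2]{szekely}. So any proof you give is ``different'' from the paper's; the more interesting comparison is internal. Your first route inverts the logical direction the paper takes: in Example~\ref{ex:hstarDisconnected} the paper \emph{deduces} $h^*(\Delta_r\times\Delta_s,x)=\sum_j\binom{r}{j}\binom{s}{j}x^j$ \emph{from} Sur\'anyi's identity, whereas you propose to prove the $h^*$-formula independently (via the staircase triangulation, Betke--McMullen/Stanley for unimodular triangulations, and a shelling whose restriction statistic matches EN-ascents) and then read off Sur\'anyi's identity from the Ehrhart series. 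That is legitimate and not circular as long as the triangulation argument stands on its own, and your generating-function reduction is sound (the index shift is harmless because $\binom{m}{r+s}=0$ for $0\leq m<r+s$ and $j\leq\min(r,s)$). The weak points are exactly where you flag them: matching the shelling restriction sets to the EN-ascent statistic requires real work, and the claim that paths with exactly $j$ EN-ascents are counted by $\binom{r}{j}\binom{s}{j}$ is true but your ``independently select $j$ east-steps and $j$ north-steps'' bijection is stated too loosely to be a proof as written (one should, e.g., decompose the word by its maximal blocks or cite Carlitz). Your second route via Pfaff--Saalsch\"utz is cleaner and essentially complete: the sum is terminating and balanced with the parameters you give, and the closed form does reduce to $\binom{t+r}{r}\binom{t+s}{s}$ after simplifying the Pochhammer ratios; this is arguably the proof one would include if the paper chose to be self-contained. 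What the geometric route buys is a conceptual explanation of why the identity is exactly the $h^*$-computation the paper needs; what the hypergeometric route buys is brevity and rigor with no combinatorial bookkeeping.
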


\begin{example}\label{ex:hstarDisconnected} Lemma~\ref{lem:Suranyi} provides us with an alternative formula of the Ehrhart polynomial of two simplices given in  Example~\ref{ex:ehrDisconnected} above. For $1\leq m \leq n-1$ a direct application of Sur\'anyi's Identity with $r=m-1$ and $s=n-m-1$ implies
\[
\ehr(\mathscr{P}(U_{1,m}\oplus U_{1,n-m}),t)\ =\ \sum_{j \geq 0} \binom{m-1}{j} \binom{n-m-1}{j} \binom{t+n-2-j}{n-2} \, .
\]
Equivalently,
\[
h^*(\mathscr{P}(U_{1,m}\oplus U_{1,n-m}),x)\ = \ \sum_{j \geq 0} \binom{m-1}{j} \binom{n-m-1}{j} \, x^j \enspace .
\]
\end{example}

\section{Ehrhart polynomials}\label{sec:EhrhartPoly}
\noindent In this section we give a proof of Theorem~\ref{thm:main1}. 

We consider the polytopes
\[ \mathscr{Q}_{k,n} = \left\{x\in \Delta_{2,n}: \sum_{i=1}^{k-1} x_i \leq 1, \sum_{i=k+1}^n x_i \leq 1\right\}\enspace \]
for all $1\leq k\leq n-1$, together with their half-open version
\[ \widetilde{\mathscr{Q}}_{k,n} := \left\{x\in \Delta_{2,n}: \sum_{i=1}^{k-1} x_i \leq 1, \sum_{i=k+1}^n x_i < 1\right\}\enspace .\]
Observe that for $k=1$, $\mathscr{Q}_{1,n}$ is isomorphic to $\Delta_{1,n-1}$ and  $\widetilde{\mathscr{Q}}_{1,n}$ is the empty polytope.

\begin{remark}\label{rem:graphic} 
    The polytope $\mathscr{Q}_{k,n}$ is the matroid polytope of a rank $2$ matroid on $n$ elements where the first $k-1$ elements are parallel and the last $n-k$ elements are parallel. This particular matroid is induced by a graph. This graph consists of a cycle of length three whenever $k>1$ to which several parallel edges have been added as follows, there is one copy of one edge, $n-k$ parallel copies of another edge, and $k-1$ parallel copies of a third edge.

    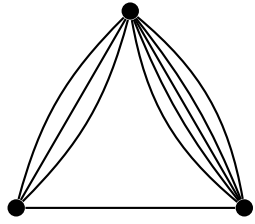
\begin{figure}[t] 
		\begin{tikzpicture} 
		[scale=3.0,auto=center,every node/.style={circle, fill=black, inner sep=2.3pt}] 
		\tikzstyle{edges} = [black, thick];
		
		\node (a2) at (-0.5,0.86) {};  
		\node (a3) at (0,1.73)  {}; 
		\node (a1) at (0.5,0.86)  {};    
		
		\draw[edges] (a2) -- (a1);
		\draw[edges] (a3) -- (a1);
		\draw[edges] (a3) edge[bend right=20] (a1) ;
		\draw[edges] (a3) edge[bend right=-20] (a1);
		\draw[edges] (a3) edge[bend right=8] (a1) ;
		\draw[edges] (a3) edge[bend right=-8] (a1);
		\draw[edges] (a3) edge[bend right=15] (a2);
		\draw[edges] (a3) edge[bend right=-15] (a2);
		\draw[edges] (a3) -- (a2);
		\end{tikzpicture}
		\caption{The graph of Remark~\ref{rem:graphic} with $n = 9$ edges and $k=4$.}\label{fig:graph}
	\end{figure}
    
    Figure \ref{fig:graph} depicts this graph for the case $n=9$ and $k=4$. These matroids fall into the well studied class of lattice path matroids. More precisely they are the snakes $\mathsf{S}(k-1, 2, n-k-1)$ in the notation of \cite{knauer18}. Notice that the snake $\mathsf{S}(1,2, n-3)$ is the minimal matroid $T_{2,n}$ of Example~\ref{ex:minimalMatroid}.
\end{remark}
    
    We obtain the following formulas for the Ehrhart polynomials of the matroid polytope~$\mathscr{Q}_{k,n}$ and the half-open polytope~$\widetilde{\mathscr{Q}}_{k,n}$.

\goodbreak

\begin{proposition}\label{prop:ehrQ} 
    For all $1\leq k\leq n-1$
    \begin{eqnarray*}
        \ehr(\mathscr{Q}_{k,n},t) &=& \binom{t+k-1}{k-1}\binom{t+n-k}{n-k} - \binom{t+n-2}{n-1} \, ,\quad \text{ and }\\
        \ehr(\widetilde{\mathscr{Q}}_{k,n},t) &=& \binom{t+k-1}{k-1} \binom{t+n-k-1}{n-k} - \binom{t+n-2}{n-1} \enspace .
    \end{eqnarray*}
\end{proposition}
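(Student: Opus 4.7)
The plan is to exhibit a near-bijection between lattice points of $t\mathscr{Q}_{k,n}$ and pairs of lattice points in the product simplex $t(\Delta_{1,k}\times\Delta_{1,n-k+1})$, by identifying the last coordinate of the first factor with the first coordinate of the second factor, both corresponding to position $k$. Given a pair $(y_1,\ldots,y_k,z_1,\ldots,z_{n-k+1})$ with nonnegative integer entries satisfying $\sum_{i=1}^k y_i = t$ and $\sum_{j=1}^{n-k+1} z_j = t$, define $x\in\mathbb{Z}^n$ by $x_i=y_i$ for $i<k$, $x_k=y_k+z_1$, and $x_{k+j}=z_{j+1}$ for $1\leq j\leq n-k$. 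Then $\sum_i x_i=2t$, the parallel-class inequalities $\sum_{i<k}x_i=t-y_k\leq t$ and $\sum_{i>k}x_i=t-z_1\leq t$ hold automatically, and every $x\in t\mathscr{Q}_{k,n}\cap \mathbb{Z}^n$ arises uniquely from such a pair by reading off $y_k=t-\sum_{i<k}x_i$ and $z_1=t-\sum_{i>k}x_i$. The only image constraint that is not automatic is $x_k\leq t$, which is equivalent to $y_k+z_1\leq t$.

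Hence $\ehr(\mathscr{Q}_{k,n},t)$ equals the total number of such pairs, $\binom{t+k-1}{k-1}\binom{t+n-k}{n-k}$, minus the number of bad pairs with $y_k+z_1\geq t+1$. To count the latter, I would set $s_1=t-y_k$ and $s_2=t-z_1$, so that the bad condition becomes $s_1+s_2\leq t-1$ with $s_1,s_2\geq 0$. Introducing a slack variable $s_3=t-1-s_1-s_2\geq 0$ and unfolding $s_1$ and $s_2$ into their components $y_1,\ldots,y_{k-1}$ and $z_2,\ldots,z_{n-k+1}$, the bad pairs are in bijection with the $n$-tuples of nonnegative integers summing to $t-1$, of which there are $\binom{t+n-2}{n-1}$ by stars and bars. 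Combined, this yields the first formula.

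For the half-open polytope $\widetilde{\mathscr{Q}}_{k,n}$, I would observe that the missing facet $\{\sum_{i>k}x_i=1\}\cap \mathscr{Q}_{k,n}$ satisfies, as a consequence of the hypersimplex equation, $\sum_{i\leq k}x_i=1$ as well, while the other parallel-class inequality $\sum_{i<k}x_i\leq 1$ becomes automatic. Hence this facet is lattice equivalent to $\Delta_{1,k}\times\Delta_{1,n-k}$, and its $t$-th dilate contains $\binom{t+k-1}{k-1}\binom{t+n-k-1}{n-k-1}$ lattice points. Subtracting this contribution from $\ehr(\mathscr{Q}_{k,n},t)$ and applying Pascal's rule $\binom{t+n-k}{n-k}=\binom{t+n-k-1}{n-k}+\binom{t+n-k-1}{n-k-1}$ produces the claimed expression for $\ehr(\widetilde{\mathscr{Q}}_{k,n},t)$. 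The main obstacle I anticipate is setting up the near-bijection cleanly, in particular identifying exactly which image constraints are automatic and which must be subtracted; once the identification at coordinate $k$ is correctly framed, the remaining counts reduce to elementary stars-and-bars computations.
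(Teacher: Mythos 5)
Your argument is correct and is essentially the paper's own proof: the paper phrases the same construction as distributing $t$ balls into the first $k$ boxes and $t$ balls into the last $n-k+1$ boxes (overlapping at box $k$), subtracts the $\binom{t+n-2}{n-1}$ configurations with more than $t$ balls in box $k$, and obtains the half-open formula by removing the same facet $\Delta_{1,k}\times\Delta_{1,n-k}$ and applying Pascal's rule. Your version merely makes the near-bijection and the stars-and-bars count of the bad pairs explicit.
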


\begin{proof}
    By definition we have
    \begin{align*}
        \ehr(\mathscr{Q}_{k,n}, t)\ &=\ \# (t\mathscr{Q}_{k,n}\cap \mathbb{Z}^n)\\
        &=\ \# \left\{x\in [0,t]^n \cap \mathbb{Z}^n : \sum_{i=1}^{n} x_i = 2t, \sum_{i=1}^{k-1} x_i \leq t, \sum_{i=k+1}^n x_i\leq t\right\} \enspace . 
    \end{align*}
    The above expression can be interpreted as the number of ways of placing $2t$ indistinguishable balls into $n$ distinct boxes, each of capacity $t$, under the additional constraints that the first $k-1$ as well as the last $n-k$ boxes together contain at most $t$ balls. The number $x_i$ equals the number of balls in box $i$ in this setting. 
    
    As a first step, we ignore the capacity bound $x_k\leq t$ for a moment, and count the number of ways that $t$ balls can be placed into the first $k$ boxes, and the remaining $t$ balls are placed into the last $n-k+1$ boxes. There are $\binom{t+k-1}{k-1}\binom{t+n-k}{n-k}$ ways of placing $2t$ balls in such a way. (Notice that this number does not count a distribution more than once, as the number of balls placed in box $k$ in the first batch can be recovered from the number of balls in the boxes $1$ to $k-1$, and similarly for the second batch.)
    
    As a second step we count in how many cases we placed more than $t$ balls in box $k$. In these cases the $k$-th box contains at least $t+1$ many balls. If we place $t+1$ many balls in box $k$, there are $\binom{t+n-2}{n-1}$ many possibilities to place the remaining $2t - (t+1) = t-1$ balls into all $n$ boxes. Subtracting this number from the above leads to the first formula.
    
    To obtain the second formula we observe that the polytope $\mathscr{Q}_{k,n}$ is the disjoint union of~$\widetilde{\mathscr{Q}}_{k,n}$ and the product of simplices
    \[
    \left\{x\in [0,1]^n: \sum_{i=1}^{k} x_i = 1, \sum_{i=k+1}^n x_i = 1\right\} \ =\ \Delta _{1,k} \times \Delta _{1,n-k}
    \]
    whose Ehrhart polynomial is equal to $\binom{t+k-1}{k-1} \binom{t+n-k-1}{n-k-1}$.
    It follows that 
    \begin{align*}
       \ehr(\widetilde{\mathscr{Q}}_{k,n},t)\ &=\ \ehr(\mathscr{Q}_{k,n},t) - \binom{t+k-1}{k-1} \binom{t+n-k-1}{n-k-1}\\
       &=\ \binom{t+k-1}{k-1}\left( \binom{t+n-k}{n-k}-\binom{t+n-k-1}{n-k-1} \right) - \binom{t+n-2}{n-1}\\
        &=\ \binom{t+k-1}{k-1} \binom{t+n-k-1}{n-k} - \binom{t+n-2}{n-1}
    \end{align*}
    as desired.
\end{proof}

We observe that the polytope $\mathscr{Q}_{2,n}$ agrees with the matroid polytope of the minimal matroid $T_{2,n}$. From Proposition~\ref{prop:ehrQ} we therefore obtain an alternative proof for the Ehrhart polynomial of the minimal matroid $T_{2,n}$ given in~\cite[Theorem 3.1]{ferroni2020ehrhart}.

\begin{corollary}\label{cor:ehrMinimal}
The Ehrhart polynomial of matroid polytope of the minimal matroid $T_{2,n}$ equals
\[
\ehr(\mathscr{P}(T_{2,n}),t)\ =\ \binom{t+n-1}{n-1} + (n-3)\binom{t+n-2}{n-1} \enspace .
\]
\end{corollary}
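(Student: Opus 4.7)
The plan is to derive the corollary directly from Proposition~\ref{prop:ehrQ} by specializing $k=2$ and then rewriting the resulting expression via standard binomial identities.

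First I would verify that $\mathscr{Q}_{2,n} = \mathscr{P}(T_{2,n})$. Setting $k=2$ in the definition of $\mathscr{Q}_{k,n}$ yields
\[
\mathscr{Q}_{2,n} \ =\ \left\{x\in\Delta_{2,n} : x_1 \leq 1, \sum_{i=3}^n x_i \leq 1\right\},
\]
and the constraint $x_1\leq 1$ is already implied by $x\in \Delta_{2,n}\subseteq [0,1]^n$. Comparing with the description of $\mathscr{P}(T_{2,n})$ given in the example following equation~\eqref{ineq:flats} confirms the identification.

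Next I would substitute $k=2$ into the first formula of Proposition~\ref{prop:ehrQ}, obtaining
\[
\ehr(\mathscr{P}(T_{2,n}),t)\ =\ (t+1)\binom{t+n-2}{n-2} - \binom{t+n-2}{n-1}.
\]
The remaining step is a routine binomial manipulation: using the absorption identity $\binom{t+n-2}{n-1} = \tfrac{t}{n-1}\binom{t+n-2}{n-2}$ on both sides (or, equivalently, Pascal's rule $\binom{t+n-1}{n-1}=\binom{t+n-2}{n-1}+\binom{t+n-2}{n-2}$), one checks that both the expression above and $\binom{t+n-1}{n-1}+(n-3)\binom{t+n-2}{n-1}$ simplify to $\tfrac{(n-2)t+(n-1)}{n-1}\binom{t+n-2}{n-2}$, completing the proof.

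I do not anticipate any real obstacle: the geometric content is entirely encoded in Proposition~\ref{prop:ehrQ}, and the only task is a short algebraic rearrangement. The mild subtlety is just to notice that the apparent extra inequality $x_1\leq 1$ defining $\mathscr{Q}_{2,n}$ is redundant, so that $\mathscr{Q}_{2,n}$ is genuinely the base polytope of $T_{2,n}$ rather than a proper subpolytope.
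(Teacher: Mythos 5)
Your proposal is correct and follows essentially the same route as the paper: identify $\mathscr{P}(T_{2,n})$ with $\mathscr{Q}_{2,n}$, specialize Proposition~\ref{prop:ehrQ} at $k=2$ to get $(t+1)\binom{t+n-2}{n-2}-\binom{t+n-2}{n-1}$, and finish with absorption/Pascal identities (the paper rewrites one side into the other rather than reducing both to a common form, but this is the same computation). Your explicit check that $x_1\leq 1$ is redundant, so that $\mathscr{Q}_{2,n}$ really is $\mathscr{P}(T_{2,n})$, is a detail the paper only asserts.
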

\begin{proof} By Proposition~\ref{prop:ehrQ}, we have
\begin{align*}
    \ehr(\mathscr{P}(T_{2,n}),t)\ &=\ \ehr(\mathscr{Q}_{2,n},t) \ =\  (t+1) \binom{t+n-2}{n-2} - \binom{t+n-2}{n-1}\\
    &=\ t \binom{t+n-2}{t} + \binom{t+n-2}{n-2} - \binom{t+n-2}{n-1}\\
    &=\ (n-1)\binom{t+n-2}{n-1} + \binom{t+n-1}{n-1} - 2\, \binom{t+n-2}{n-1}\\
    &=\ \binom{t+n-1}{n-1} + (n-3)\binom{t+n-2}{n-1} \, 
\end{align*}
which proves the claim.
\end{proof}

For $1\leq \ell \leq n-1$ we now consider the half-open polytope

\[ \widetilde{\mathscr{R}}_{\ell,n} :=  \left\{ x\in \Delta_{2,n} : \sum_{i=1}^\ell x_i > 1\right\} = \left\{ x\in \Delta_{2,n} : \sum_{i=\ell+1}^n x_i < 1\right\} \enspace .\]
Observe that $\widetilde{\mathscr{R}}_{1,n}$ agrees with $\widetilde{\mathscr{Q}}_{1,n}$ which is the empty polytope. Furthermore, note that each of the polytopes $\widetilde{\mathscr{R}}_{\ell,n}$ can be decomposed as \[
\widetilde{\mathscr{R}}_{\ell,n}\ =\ \widetilde{\mathscr{Q}}_{1,n}\sqcup \widetilde{\mathscr{Q}}_{2,n}\sqcup \cdots \sqcup \widetilde{\mathscr{Q}}_{\ell,n} \enspace .\]

For all $n\geq 0$ and $1\leq a \leq n$ we define the polynomials
\[P_{a,n}(t):= \sum_{k=1}^a \binom{t+n-k-1}{n-k}\binom{t+k-1}{k-1}\enspace .\]
Furthermore, we set $P_{0,n}(t):=0$ for all $n\geq 0$. 

As a direct consequence of Proposition~\ref{prop:ehrQ} we obtain the following.

\begin{corollary}\label{cor:EhrR}
    For all $1\leq \ell \leq n-1$ the Ehrhart polynomial of $\widetilde{\mathscr{R}}_{\ell,n}$ equals
    \[ \ehr(\widetilde{\mathscr{R}}_{\ell,n},t) = P_{\ell, n}(t)-\ell \binom{t+n-2}{n-1}.\]
\end{corollary}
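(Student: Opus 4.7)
The plan is to use the disjoint decomposition
\[ \widetilde{\mathscr{R}}_{\ell,n}\ =\ \widetilde{\mathscr{Q}}_{1,n}\sqcup \widetilde{\mathscr{Q}}_{2,n}\sqcup \cdots \sqcup \widetilde{\mathscr{Q}}_{\ell,n}\]
stated just above the corollary. Once this decomposition is in hand, the result reduces to summing the closed-form expressions for $\ehr(\widetilde{\mathscr{Q}}_{k,n},t)$ supplied by the second formula of Proposition~\ref{prop:ehrQ}, and then matching the sum against the definition of $P_{\ell,n}(t)$.

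First I would verify the decomposition carefully. Given $x\in\widetilde{\mathscr{R}}_{\ell,n}$, the coordinate sum $\sum_{i=1}^n x_i=2$ combined with $\sum_{i=\ell+1}^n x_i<1$ forces $\sum_{i=1}^\ell x_i>1$. Let $k$ be the smallest index in $\{1,\ldots,\ell\}$ with $\sum_{i=k+1}^n x_i<1$; such a $k$ exists because $\ell$ itself qualifies. If $k=1$ then $\sum_{i=1}^{k-1}x_i=0\leq 1$, and if $k>1$ then the minimality of $k$ gives $\sum_{i=k}^n x_i\geq 1$, which together with the coordinate sum $2$ yields $\sum_{i=1}^{k-1}x_i\leq 1$. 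In both cases $x\in\widetilde{\mathscr{Q}}_{k,n}$. For disjointness, suppose $x\in\widetilde{\mathscr{Q}}_{k,n}\cap\widetilde{\mathscr{Q}}_{k',n}$ with $k<k'$. Then $\sum_{i=1}^{k'-1}x_i\leq 1$, so $\sum_{i=k'}^n x_i\geq 1$, which forces $\sum_{i=k+1}^n x_i\geq \sum_{i=k'}^n x_i\geq 1$ since $k+1\leq k'$, contradicting $\sum_{i=k+1}^n x_i<1$.

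Since Ehrhart's Theorem extends additively to disjoint unions of half-open lattice polytopes, I obtain
\[ \ehr(\widetilde{\mathscr{R}}_{\ell,n},t)\ =\ \sum_{k=1}^{\ell}\ehr(\widetilde{\mathscr{Q}}_{k,n},t). \]
Substituting the second identity of Proposition~\ref{prop:ehrQ} gives
\[ \ehr(\widetilde{\mathscr{R}}_{\ell,n},t)\ =\ \sum_{k=1}^{\ell}\binom{t+k-1}{k-1}\binom{t+n-k-1}{n-k}\ -\ \ell\binom{t+n-2}{n-1}, \]
and the first sum is $P_{\ell,n}(t)$ by definition.

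There is no real obstacle to this argument: the only subtlety is checking that the decomposition genuinely covers $\widetilde{\mathscr{R}}_{\ell,n}$ without overlap, which amounts to the minimality argument above. Everything else is bookkeeping.
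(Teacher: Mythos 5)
Your argument is correct and is exactly the route the paper takes: it states the decomposition $\widetilde{\mathscr{R}}_{\ell,n}=\widetilde{\mathscr{Q}}_{1,n}\sqcup\cdots\sqcup\widetilde{\mathscr{Q}}_{\ell,n}$ just before the corollary and derives the formula as a direct consequence of Proposition~\ref{prop:ehrQ}. The only difference is that you spell out the (correct) verification of the cover and disjointness, which the paper leaves implicit.
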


Similarly, we may decompose the second hypersimplex $\Delta_{2,n}$ as
\begin{equation}\label{eq:decomposition-hypersimplex}
    \Delta_{2,n}  = \widetilde{\mathscr{R}}_{n-1,n}\sqcup \Delta _{1,n-1} = \widetilde{\mathscr{Q}}_{1,n}\sqcup \widetilde{\mathscr{Q}}_{2,n}\sqcup \cdots \sqcup \widetilde{\mathscr{Q}}_{n-1,n} \sqcup \Delta _{1,n-1} \enspace .
\end{equation}
This decomposition allows us to give a simple proof for the known formula for the Ehrhart polynomial of second hypersimplices due to Katzman \cite{katzman-hypersimplices}.

\begin{corollary}\label{cor:EhrHypersimplex}
The Ehrhart polynomial of the hypersimplex $\Delta_{2,n}$ is given by
        \[ \ehr(\Delta_{2,n},t) = \binom{2t+n-1}{n-1} - n \binom{t+n-2}{n-1}.\]
\end{corollary}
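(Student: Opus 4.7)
The plan is to apply the decomposition in~\eqref{eq:decomposition-hypersimplex}, which exhibits $\Delta_{2,n}$ as the disjoint union $\widetilde{\mathscr{R}}_{n-1,n} \sqcup \Delta_{1,n-1}$. Passing to Ehrhart polynomials and invoking Corollary~\ref{cor:EhrR} together with the standard formula $\ehr(\Delta_{1,n-1},t) = \binom{t+n-2}{n-2}$, one obtains
\[
\ehr(\Delta_{2,n},t)\ =\ P_{n-1,n}(t)\ -\ (n-1)\binom{t+n-2}{n-1}\ +\ \binom{t+n-2}{n-2}\enspace.
\]
Comparing with the desired right-hand side, it suffices to establish the closed-form evaluation
\[
P_{n-1,n}(t)\ =\ \binom{2t+n-1}{n-1}\ -\ \binom{t+n-1}{n-1},
\]
after which a single application of Pascal's identity $\binom{t+n-1}{n-1} = \binom{t+n-2}{n-1} + \binom{t+n-2}{n-2}$ causes the stray $\binom{t+n-2}{n-2}$ term to cancel and collects the remaining pieces into $-n\binom{t+n-2}{n-1}$.

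The main work therefore lies in evaluating $P_{n-1,n}(t)$. I would first re-index by the substitution $j = n-k$ to rewrite
\[
P_{n-1,n}(t)\ =\ \sum_{j=1}^{n-1}\binom{t+j-1}{j}\binom{t+n-j-1}{n-j-1},
\]
then extend the sum to $j=0$ at the cost of an explicit boundary term. Now both factors are coefficients of standard negative-binomial series: $\binom{t+j-1}{j} = [x^j](1-x)^{-t}$ and, noting that $\binom{t+n-j-1}{n-j-1}$ has upper minus lower index equal to $t$, one has $\binom{t+n-j-1}{n-j-1} = [x^{n-j-1}](1-x)^{-(t+1)}$. Multiplying the generating series gives $(1-x)^{-t}\cdot(1-x)^{-(t+1)} = (1-x)^{-(2t+1)}$, whose coefficient of $x^{n-1}$ is $\binom{2t+n-1}{n-1}$. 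Hence the Chu--Vandermonde convolution
\[
\sum_{j=0}^{n-1}\binom{t+j-1}{j}\binom{t+n-j-1}{n-j-1}\ =\ \binom{2t+n-1}{n-1}
\]
holds, and isolating the $j=0$ summand $\binom{t+n-1}{n-1}$ yields the required identity for $P_{n-1,n}(t)$.

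The step I expect to be the main subtlety is pinning down this Vandermonde convolution: the two binomials in the sum are shifted asymmetrically (by $t$ versus $t+1$), so some care is required to identify them correctly as coefficients of $(1-x)^{-t}$ and $(1-x)^{-(t+1)}$. As an independent sanity check, and one that bypasses the decomposition altogether, one may derive the formula directly via inclusion--exclusion: the number of nonnegative integer solutions of $\sum_{i=1}^n x_i = 2t$ is $\binom{2t+n-1}{n-1}$, and one must exclude those with $x_j \geq t+1$ for some $j$. For each fixed $j$ there are $\binom{t+n-2}{n-1}$ such violators, and no two violations can occur simultaneously since $2(t+1)>2t$; inclusion--exclusion immediately yields $\binom{2t+n-1}{n-1}-n\binom{t+n-2}{n-1}$. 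Either route completes the proof.
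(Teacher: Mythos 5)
Your main argument is correct and follows essentially the same route as the paper: both use the decomposition~\eqref{eq:decomposition-hypersimplex} together with Corollary~\ref{cor:EhrR}, and both reduce to the same Chu--Vandermonde convolution (the paper cites \cite[(5.26)]{graham-knuth-patashnik}, while you derive it from the product $(1-x)^{-t}(1-x)^{-(t+1)}=(1-x)^{-(2t+1)}$; your identification of the asymmetric shifts is correct, and the $j=0$ boundary term $\binom{t+n-1}{n-1}$ and the final Pascal cancellation all check out). Your closing ``sanity check'' is worth singling out: the direct inclusion--exclusion count of solutions of $x_1+\cdots+x_n=2t$ with $0\le x_i\le t$, using that no two coordinates can simultaneously exceed $t$, is a complete and strictly more elementary proof of the corollary that bypasses the half-open decomposition and the binomial convolution entirely --- though of course it does not produce the refined information about the pieces $\widetilde{\mathscr{Q}}_{k,n}$ that the paper needs for Theorem~\ref{thm:main1}.
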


\begin{proof}
From Equation~\eqref{eq:decomposition-hypersimplex} and Proposition~\ref{prop:ehrQ} we obtain
\begin{eqnarray*}
        \ehr(\Delta_{2,n},t)
        &=& \sum_{k=1}^{n-1} \binom{t+n-k-1}{n-k}\binom{t+k-1}{k-1} - (n-1)\binom{t+n-2}{n-1}+\binom{t+n-2}{n-2}\\
        &=& \sum_{k=1}^{n} \binom{t+n-k-1}{t-1} \binom{t+k-1}{t}- n\binom{t+n-2}{n-1}\nonumber\\
        &=& \binom{2t+n-1}{n-1}-n\binom{t+n-2}{n-1} \, .
\end{eqnarray*}
where in the last step we used a variation of the Chu–Vandermonde identity on binomial coefficients which, for example, can be found in~\cite[(5.26) on page 169]{graham-knuth-patashnik}.
\end{proof}

We are now prepared to prove Theorem~\ref{thm:main1}.
\begin{proof}[Proof of Theorem~\ref{thm:main1}] First note that a rank $2$ matroid is disconnected whenever it has only $s\leq 2$ hyperplanes.
Moreover, the ground set of a connected matroid $M$ of rank~$2$ with $s$ hyperplanes has at least $s\geq 3$ elements, and a connected matroid on $n\geq 2$ elements is loopless. Thus formula~\eqref{ineq:hyperplanes} applies and hence the matroid polytope of $M$ is
    \[ \mathscr{P}(M)\ =\ \left\{ x\in \Delta_{2,n} : \sum_{i\in H} x_i \leq 1 \text{ for every $H$ hyperplane}\right\}\enspace .\]
    Furthermore, the matroid hyperplanes of a loopless rank $2$ matroid partition the ground set.
    Now pick any hyperplane $H$ of cardinality $a_r$.
    The subset of $\Delta_{2,n}$ that violates the inequality for $H$ is a copy of $\widetilde{\mathscr{R}}_{a_r,n}$ after permuting the coordinates. 
    Moreover, Lemma~\ref{lem:seperation} shows that a point in $\Delta_{2,n}$ can violate at most one inequality imposed by a hyperplane.
    
     Therefore, by applying the formulas for the Ehrhart polynomials of Corollary~\ref{cor:EhrR} and \ref{cor:EhrHypersimplex} we obtain:
    \begin{align*}
        \ehr(\mathscr{P}(M),t)\ &=\ \ehr(\Delta_{2,n},t) - \sum_{i=1}^{s} \ehr(\widetilde{\mathscr{R}}_{a_i,n}, t)\\
        &=\ \left(\binom{2t+n-1}{n-1} - n \binom{t+n-2}{n-1}\right) - \sum_{i=1}^s \left(P_{a_i,n}(t) - a_i\binom{t+n-2}{n-1}\right)\\
        &=\ \binom{2t+n-1}{n-1} - \sum_{i=1}^s P_{a_i,n}(t)
    \end{align*}
    where in the last step we used $a_1+\cdots + a_s=n$ which is satisfied since the hyperplanes form a partition of the ground set.
\end{proof}

\section{Ehrhart positivity}\label{sec:Positivity}
\noindent The purpose of this section is to prove Theorem~\ref{thm:main}. Our proof rests on the following superadditivity of the polynomials $P_{a,n}$.

\begin{proposition}\label{prop:superadditivity}
For all nonnegative integers $a, b, n$ such that $a+b\leq n$
    \[P_{a,n} + P_{b,n}\ \preceq\ P_{a+b,n} \enspace .\]
Moreover, the inequality on the coefficients of positive degree is strict whenever $a,b>0$.
\end{proposition}

\begin{proof} There is nothing to show if $a=0$. Thus fix numbers $1\leq a\leq b$ such that $a+b\leq n$. We are going to prove that 
        \begin{equation}\label{iteration} P_{a,n} + P_{b,n}\ \preceq\ P_{a-1,n} + P_{b+1,n}\enspace .\end{equation}
    This will prove the claim since applying this inequality $a$ times yields
        \[ P_{a,n}+P_{b,n}\ \preceq\ P_{a-1,n} + P_{b+1,n}\ \preceq\ P_{a-2,n} + P_{b+2,n}\ \preceq\ \cdots\ \preceq\ P_{0,n} + P_{a+b,n}\ =\ P_{a+b,n}\enspace .\]
    Moreover, our proof will show that in \eqref{iteration} the inequality on the coefficients of positive degree is strict. 
    Inequality \eqref{iteration} is equivalent to
        \[ P_{a,n} - P_{a-1,n}\ \preceq\ P_{b+1,n} - P_{b,n}\enspace ,\]
    which, by definition, is equivalent to
        \begin{equation}\label{eq:inequality1}
            \binom{t+n-a-1}{n-a}\binom{t+a-1}{a-1}\ \preceq\ \binom{t+n-b-2}{n-b-1}\binom{t+b}{b}\enspace .
        \end{equation}
    Notice that both sides have the common factor $\binom{t+n-b-2}{n-b-1}\binom{t+a-1}{a-1}$ which has nonnegative coefficients.
    After canceling this factor and multiplication with the positive number $\binom{b}{b-a+1}\binom{n-a}{b-a+1}$, we obtain the inequality
         \begin{equation}\label{eq:inequality2}
            \binom{t+n-a-1}{b-a+1} \binom{b}{b-a+1}\ \preceq\ \binom{t+b}{b-a+1}\binom{n-a}{b-a+1}\enspace .
         \end{equation}
    Inequality~\eqref{eq:inequality1} is implied by \eqref{eq:inequality2} using property \eqref{rem:multiplication}. Also, notice that if we prove that \eqref{eq:inequality2} is strict for all coefficients, then \eqref{eq:inequality1} is strict for all coefficients of positive degree. This is because the polynomial $\binom{t+n-b-2}{n-b-1}\binom{t+a-1}{a-1}$ is a product of $t$ and a polynomial with positive coefficients.
    
    To prove this, we use the following variables $c = n-a$ and $u=b-a+1$.
    Since $a+b\leq n$ we have $b\leq c$. Moreover, we have $1\leq u \leq b$. Observe that inequality~\eqref{eq:inequality2} reads
    \begin{equation}
        \binom{t+c-1}{u} \binom{b}{u}\ \preceq\ \binom{t+b}{u}\binom{c}{u}\enspace ,
    \end{equation}
    after substitution.  Observe further that if $b=c$, then the inequality is automatically satisfied, and is in fact strict on all coefficients. Assume now that $b<c$, so that $c-1\geq b$. Notice that if we multiply twice with $u!$, the inequality to prove becomes
        \[ (t+c-1) \cdots (t+c-u) \cdot \frac{b!}{(b-u)!}\ \preceq\ (t+b)\cdots (t+b-u+1)\cdot \frac{c!}{(c-u)!}\enspace .\]  
    which can be rewritten as
        \[\frac{(c-u)!}{c!} \cdot (t+c-1) \cdots (t+c-u)\ \preceq\  \frac{(b-u)!}{b!}\cdot (t+b)\cdots (t+b-u+1)\enspace .\]
    And this is equivalent to
        \[ \frac{c-u}{c} \cdot \left(\frac{t}{c-1} + 1\right) \cdots \left(\frac{t}{c-u} + 1\right)\ \preceq\ \left(\frac{t}{b} + 1\right) \cdots \left(\frac{t}{b-u+1} + 1\right)\enspace .\]
    And since $c-1\geq b$, and $\frac{c-u}{c}< 1$, the claim follows from property \eqref{rem:multiplication} by comparing the coefficients at each individual factor on the left with the corresponding factor on the right.
\end{proof}

We end this section with the proof of Theorem~\ref{thm:main} using the superadditivity of Proposition~\ref{prop:superadditivity} and the formulas of Theorem~\ref{thm:main1} and Corollary~\ref{cor:EhrHypersimplex}.
\begin{proof}[Proof of Theorem~\ref{thm:main}]
    Recall that the minimal matroid $T_{2,n}$ has exactly three hyperplanes, of cardinalities $1$, $1$, and $n-2$, respectively (cf. Example \ref{ex:minimalMatroid}). The uniform matroid $U_{2,n}$, on the other hand, has $n$ hyperplanes each of cardinality $1$. 
    
    Since we are under the hypothesis of $M$ being connected, we know that $M$ has at least $s\geq 3$ hyperplanes that partition the ground set. Assume that these hyperplanes have cardinalities $a_1,\ldots,a_s$. The sum of these numbers is $n$.
    
    By using Theorem \ref{thm:main1}, after cancelling $\binom{2t+n-1}{n-1}$ and multiplying by $-1$, the inequalities to prove read
    \begin{equation}\label{eqn:inequalities}
        \underbrace{P_{1,n}+\cdots + P_{1,n}}_{n\text{ summands}}\ \preceq\ \sum_{i=1}^s P_{a_i,n}\ \preceq\ P_{1,n} + P_{1,n} + P_{n-2,n}\enspace .
    \end{equation}
    The left inequality follows directly from the superadditivity in Proposition~\ref{prop:superadditivity}, since we may group the summands on the left into groups of sizes $a_1,\ldots,a_s$ and get the inequality with the expression in the middle. To prove the right inequality, we proceed by looking at inequality \eqref{iteration}. Recall that $s\geq 3$, so that we can assume $1\leq a_1\leq a_2\leq a_3$. By repeatedly applying \eqref{iteration} we get
    \begin{align*}
        P_{a_1,n} + P_{a_2,n} + P_{a_3,n}\ &\preceq\ P_{1,n} + P_{a_1+a_2-1,n} + P_{a_3,n}\\
        &\preceq\ P_{1,n} + P_{1,n} + P_{a_1+a_2+a_3-2,n}\enspace .
    \end{align*}
    Using the superadditivity again we arrive at
    \[ 
    \sum_{i=1}^s P_{a_i,s}\ \preceq\ P_{1,n} + P_{1,n} + P_{a_1+a_2+a_3-2,n} + \sum_{i=4}^s P_{a_i,s}\ \preceq\ P_{1,n} + P_{1,n} + P_{n-2,n}\enspace ,
    \]
    which completes the prove of the desired inequality. Corollary~\ref{cor:ehrMinimal} shows
    \[
    \ehr(\mathscr{P}(T_{2,n}),t)\ =\ \binom{t+n-1}{n-1} + (n-3)\binom{t+n-2}{n-1}
    \]
    which has only positive coefficients. In particular, it follows that all connected matroids of rank $2$ are Ehrhart positive.
    Moreover, the inequalities given in \eqref{eqn:inequalities} are strict for the coefficients of positive degree by Proposition~\ref{prop:superadditivity}. This proves that the coefficients of the Ehrhart polynomial of a connected rank $2$ matroid $M$ are strictly between those of the minimal and the uniform matroid unless the coefficient is the constant term or the matroid $M$ is either the minimal matroid $T_{2,n}$ or the uniform matroid $U_{2,n}$. Furthermore, recall that by Remark~\ref{rem:EhrPositivity} the Ehrhart polynomial of a disconnected rank~$2$ matroid is Ehrhart positive. This completes the proof.
\end{proof}

\section{\texorpdfstring{$h^*$}{h*}-polynomials}\label{sec:h_star}
\noindent In this section we prove Theorem~\ref{thm:main3}. We first deduce a formula for the $h^*$-polynomial of general rank $2$ matroids. We then use this formula to show that the $h^*$-polynomial is real-rooted in case of  sparse paving matroids.

\subsection{Matroids of rank two}
The following proposition provides a formula for the $h^*$-polynomials of general connected matroids of rank two. It is the counterpart of Theorem~\ref{thm:main1} for $h^*$-polynomials and, in fact, follows from it.
\begin{proposition}\label{main_but_for_h_star}
        Let $M$ be a connected matroid of rank $2$. Suppose that $M$ has exactly $s$ hyperplanes of sizes $a_1, \ldots, a_s$. Then $s\geq 3$ and we have
        \[ h^*(\mathscr{P}(M), x)\ =\ \sum_{j=0}^{\lfloor\frac{n}{2}\rfloor}\binom{n}{2j} x^j - \sum_{i=1}^{s} p^\ast_{a_i,n}(x)\enspace ,\]
    where
        \[ p^\ast_{a,n}(x) := \sum_{k=1}^a \sum_{j\geq 1} \binom{k}{j}\binom{n-k-1}{j-1}\, x^j
        \]
         for $1\leq a\leq n$.
\end{proposition}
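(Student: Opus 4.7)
The plan is to derive the $h^*$-polynomial formula directly from the Ehrhart polynomial formula of Theorem~\ref{thm:main1}. Since $M$ is connected of rank two on $n$ elements, the matroid polytope $\mathscr{P}(M)$ has dimension $d = n-1$, so by definition $h^*(\mathscr{P}(M), x) = (1-x)^n \sum_{t \geq 0} \ehr(\mathscr{P}(M), t)\, x^t$. The operation $p(t)\mapsto (1-x)^n\sum_{t\ge 0}p(t)x^t$ is $\RR$-linear on polynomials of degree at most $n-1$, so Theorem~\ref{thm:main1} reduces the claim to verifying the two identities
\[ (1-x)^n \sum_{t \geq 0} \binom{2t+n-1}{n-1}\, x^t \ =\ \sum_{j=0}^{\lfloor n/2 \rfloor} \binom{n}{2j}\, x^j \]
and, for each $1 \leq k \leq n-1$,
\[ (1-x)^n \sum_{t \geq 0} \binom{t+n-k-1}{n-k}\binom{t+k-1}{k-1}\, x^t \ =\ \sum_{j \geq 1}\binom{k}{j}\binom{n-k-1}{j-1}\, x^j. \]

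The first identity I would prove by the even-part extraction trick. Substituting $y = \pm \sqrt{x}$ into the standard identity $\sum_{m \geq 0} \binom{m+n-1}{n-1} y^m = (1-y)^{-n}$ and averaging yields
\[ \sum_{t \geq 0} \binom{2t+n-1}{n-1}\, x^t \ =\ \frac{(1+\sqrt{x})^n + (1-\sqrt{x})^n}{2(1-x)^n}, \]
and the binomial theorem identifies the numerator as $\sum_{j=0}^{\lfloor n/2 \rfloor}\binom{n}{2j} x^j$.

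For the second identity, note that Sur\'anyi's identity (Lemma~\ref{lem:Suranyi}) cannot be applied directly because the factor $\binom{t+n-k-1}{n-k}$ is not of the form $\binom{t+r}{r}$. The plan is to circumvent this via Pascal's identity $\binom{t+n-k-1}{n-k} = \binom{t+n-k}{n-k} - \binom{t+n-k-1}{n-k-1}$, which splits the product into two pieces to which Lemma~\ref{lem:Suranyi} applies with $(r,s) = (n-k, k-1)$ and $(n-k-1, k-1)$, respectively. The outcome is the expression
\[ \sum_{j \geq 0} \binom{n-k}{j}\binom{k-1}{j}\binom{t+n-1-j}{n-1} \ -\ \sum_{j \geq 0} \binom{n-k-1}{j}\binom{k-1}{j}\binom{t+n-2-j}{n-2}. \]
I would then pass to $h^*$-contributions using the standard identities $(1-x)^n \sum_t \binom{t+n-1-j}{n-1} x^t = x^j$ and $(1-x)^n \sum_t \binom{t+n-2-j}{n-2} x^t = x^j(1-x) = x^j - x^{j+1}$, and finally simplify via $\binom{n-k}{j} - \binom{n-k-1}{j} = \binom{n-k-1}{j-1}$ together with $\binom{k-1}{j} + \binom{k-1}{j-1} = \binom{k}{j}$ to obtain the desired $\sum_{j \geq 1} \binom{k}{j}\binom{n-k-1}{j-1} x^j$. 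The main obstacle is the careful bookkeeping of the index shifts in this last simplification; the underlying identities are elementary, but ensuring that all cross-terms cancel correctly requires attention to detail.
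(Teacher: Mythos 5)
Your proposal is correct and follows essentially the same route as the paper: reduce via linearity of the $h^*$-transform to the two identities, and handle the $P_{a,n}$ term by splitting $\binom{t+n-k-1}{n-k}$ with Pascal's rule and applying Sur\'anyi's identity twice with parameters $(n-k,k-1)$ and $(n-k-1,k-1)$, exactly as in the paper. The only (harmless) difference is that for the identity $(1-x)^n\sum_{t\ge 0}\binom{2t+n-1}{n-1}x^t=\sum_{j}\binom{n}{2j}x^j$ you supply a short self-contained proof by even-part extraction of $(1-y)^{-n}$, whereas the paper cites Katzman and Savage--Schuster.
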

\begin{proof}
By Theorem~\ref{thm:main1} it suffices to show that 
\begin{equation}\label{eq:katzman}
    \binom{2t+n-1}{n-1} \ = \ \sum_{j=0}^{\lfloor\frac{n}{2}\rfloor} \binom{n}{2j}\binom{t+n-1-j}{n-1}
\end{equation}
and
\begin{equation}\label{eq:pahstar}
     P_{a,n}(t)\ =\ \sum_{k=1}^a \sum_{j \geq 1} \binom{k}{j}\binom{n-k-1}{j}\binom{t+n-1-j}{n-1}  \, .
\end{equation}
for all $1\leq a\leq n$.

Equation~\eqref{eq:katzman} is a special case of~\cite[Theorem 2.5]{katzman-hypersimplices}. See also \cite[Corollary 8]{savage-schuster} for a combinatorial proof.

Equation~\eqref{eq:pahstar} follows from applying Sur\'anyi's identity (Lemma~\ref{lem:Suranyi}) twice. Lemma~\ref{lem:Suranyi} for $r=n-k$ and $s=k-1$ yields
    \begin{align*}
     \binom{t+n-k}{n-k}\binom{t+k-1}{k-1} =\sum_{j \geq 0} \binom{n-k}{j}\binom{k-1}{j}\binom{t+n-j-1}{n-1}\\
        = \sum_{j \geq 0}  \binom{n-k-1}{j} \binom{k-1}{j}\binom{t+n-j-1}{n-1}+\sum_{j \geq 1}  \binom{n-k-1}{j-1} \binom{k-1}{j}\binom{t+n-j-1}{n-1}
    \end{align*}
Setting $r=n-k-1$ and $s=k-1$ in Lemma~\ref{lem:Suranyi} we obtain
    \begin{align*}
    \binom{t+n-k-1}{n-k-1}\binom{t+k-1}{k-1} = \sum_{j\geq 0} \binom{n-k-1}{j}\binom{k-1}{j}\binom{t+n-j-2}{n-2}\enspace .
    \end{align*}
    Taking the difference of these two expressions and using Pascal's identity shows that $\binom{t+n-k-1}{n-k}\binom{t+k-1}{k-1}$ is equal to
    \begin{align*}
       \sum_{j \geq 0} \binom{n-k-1}{j}\binom{k-1}{j}\binom{t+n-j-2}{n-1} + \sum_{j \geq 1} \binom{n-k-1}{j-1}\binom{k-1}{j}\binom{t+n-j-1}{n-1}.
    \end{align*}
    After shifting the index of the first sum by one, this simplifies to 
    \[
 \sum_{j \geq 1} \binom{n-k-1}{j-1}\binom{k}{j}\binom{t+n-j-1}{n-1} \, .
    \]
Thus the polynomial $P_{a,n}(t)$ is equal to
\[
\sum_{k=1}^a \binom{t+n-k-1}{n-k}\binom{t+k-1}{k-1} \ = \ \sum_{k=1}^a \sum_{j \geq 1} \binom{k}{j}\binom{n-k-1}{j-1}\binom{t+n-1-j}{n-1}
\]
as desired.
\end{proof}

\subsection{Sparse paving matroids}
Now we turn our attention to the real-rootedness of the $h^*$-polynomial for sparse paving matroids of rank $2$. Recall that hyperplanes of such matroids have size at most $2$.  
In particular, Proposition \ref{main_but_for_h_star} simplifies in this case.

\begin{corollary}\label{h_star_for_sparse_paving}
    Let $M$ be a connected sparse paving matroid of rank $2$ on $n$ elements, and let $\lambda$ be the number of hyperplanes of size $2$ of $M$. Then
    \[ h^*(\mathscr{P}(M), x)\ =\ \sum_{j=0}^{\lfloor\frac{n}{2}\rfloor} \binom{n}{2j}\, x^j - (n+\lambda)\,x - \lambda\, (n-3)\,x^2\enspace .\]
\end{corollary}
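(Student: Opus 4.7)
The plan is to specialize Proposition~\ref{main_but_for_h_star} to the sparse paving case via a direct evaluation of $p^\ast_{a,n}(x)$ for the only two possible hyperplane sizes, namely $a=1$ and $a=2$, and then book-keep the hyperplanes of each type.

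First I would record the combinatorial input. Since $M$ is a loopless rank $2$ matroid, its hyperplanes partition the ground set $E$ of size $n$ (this was already used in the proof of Theorem~\ref{thm:main1}). Being sparse paving forces every hyperplane to have size at most $2$, so if $\lambda$ denotes the number of size-$2$ hyperplanes then there are exactly $n-2\lambda$ size-$1$ hyperplanes. In particular, the total number of hyperplanes is $s=n-\lambda$, and in the sum
\[
    \sum_{i=1}^{s} p^\ast_{a_i,n}(x) \ = \ (n-2\lambda)\,p^\ast_{1,n}(x) \ + \ \lambda\, p^\ast_{2,n}(x)
\]
appearing in Proposition~\ref{main_but_for_h_star} only the two polynomials $p^\ast_{1,n}$ and $p^\ast_{2,n}$ contribute.

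Next I would evaluate these two polynomials directly from the definition
\[
    p^\ast_{a,n}(x) \ = \ \sum_{k=1}^{a}\sum_{j\geq 1}\binom{k}{j}\binom{n-k-1}{j-1}\, x^j .
\]
For $a=1$ only $k=1$ appears, and $\binom{1}{j}$ vanishes for $j\geq 2$, so $p^\ast_{1,n}(x)=x$. For $a=2$ we add the $k=2$ contribution to the previous one; again $\binom{2}{j}$ vanishes for $j\geq 3$, leaving
\[
    p^\ast_{2,n}(x) \ = \ x \ + \ \binom{2}{1}\binom{n-3}{0}\,x \ + \ \binom{2}{2}\binom{n-3}{1}\,x^{2} \ = \ 3x + (n-3)x^{2}.
\]

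Finally I would substitute these expressions into the Proposition~\ref{main_but_for_h_star} formula, obtaining
\[
    \sum_{i=1}^{s} p^\ast_{a_i,n}(x) \ = \ (n-2\lambda)\,x + \lambda\bigl(3x+(n-3)x^{2}\bigr) \ = \ (n+\lambda)\,x + \lambda(n-3)\,x^{2},
\]
which, inserted into the formula of Proposition~\ref{main_but_for_h_star}, yields exactly the claim. There is no real obstacle here: the statement is a purely computational specialization of Proposition~\ref{main_but_for_h_star}, and the only mild point is to be careful that the count of size-$1$ hyperplanes is $n-2\lambda$ (not $n-\lambda$) because each size-$2$ hyperplane consumes two ground set elements.
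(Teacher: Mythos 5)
Your proposal is correct and follows exactly the paper's own route: specialize Proposition~\ref{main_but_for_h_star} using the fact that the hyperplanes partition the ground set, so there are $n-2\lambda$ hyperplanes of size one, and then evaluate $p^\ast_{1,n}(x)=x$ and $p^\ast_{2,n}(x)=3x+(n-3)x^2$. The paper leaves these two small evaluations implicit, but the argument is the same.
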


\begin{proof}
    If the matroid $M$ of rank $2$ is sparse paving and has $\lambda$ hyperplanes of size $2$ the remaining hyperplanes must all be of size $1$. Moreover, as all hyperplanes partition the ground set the number of hyperplanes of size one must be $n-2\lambda$.
    The desired result follows from Proposition \ref{main_but_for_h_star} using these numbers.
\end{proof}

The general idea of our proof of Theorem~\ref{thm:main3} is to consider the substitution $x=-\tan^2 \theta$. This will then allow us to find explicit values for which the $h^*$-polynomial alternates in sign. We then obtain its real-rootedness from the Intermediate Value Theorem. 

A key ingredient of the proof is the following lemma.


\begin{lemma}\label{lem:bounds}
    For $a\in\{1,2\}$, all $n\geq 9$ and all $y\in (0,1]$
    \[
    -\frac{a}{n} \,<\, y^{\frac{n}{2}}\, p_{a,n}^\ast \left(\frac{y-1}{y}\right) \, < \, \frac{a}{n} \enspace .
    \]
\end{lemma}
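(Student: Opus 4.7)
The plan is to evaluate $p^\ast_{a,n}$ explicitly for $a\in\{1,2\}$ and then perform a direct calculus analysis of the resulting real function on $(0,1]$. From the definition one immediately reads off
$$p^\ast_{1,n}(x) \;=\; x, \qquad p^\ast_{2,n}(x) \;=\; 3x + (n-3)x^2,$$
so that, writing $F_a(y) := y^{n/2}\, p^\ast_{a,n}((y-1)/y)$ and factoring out $y-1$ in the quadratic case,
\begin{align*}
F_1(y) &\;=\; y^{(n-2)/2}(y-1), \\
F_2(y) &\;=\; n\, y^{(n-4)/2} (y-1)\!\left(y - \tfrac{n-3}{n}\right).
\end{align*}
These formulas reduce the lemma to two one-variable optimization problems on $(0,1]$.

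For $a=1$, the function $F_1$ is non-positive on $(0,1]$, so the upper bound $F_1(y)<1/n$ is immediate. For the lower bound I would maximize $-F_1(y)=y^{(n-2)/2}(1-y)$ by calculus: the unique interior critical point is $y^\ast = 1-2/n$, with value $(2/n)(1-2/n)^{(n-2)/2}$. The desired estimate then reduces to the inequality $(1-2/n)^{(n-2)/2}<1/2$, which follows from the elementary bound $(1-x)^k\le e^{-kx}$ since $e^{-1+2/n}\le e^{-7/9}<1/2$ for $n\ge 9$.

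For $a=2$, the factored form shows that $F_2$ vanishes at $y=1-3/n$ and $y=1$, is non-negative on $(0,1-3/n]$, and non-positive on $[1-3/n,1]$. Setting the logarithmic derivative of $|F_2|$ to zero yields the quadratic
$$n^2 y^2 - (2n^2-7n+6)\,y + (n-3)(n-4) \;=\; 0,$$
whose two roots $y_-<1-3/n<y_+$ are the unique interior critical points. The substitution $u=n(1-y)$ rescales this equation to $u^2-7u+6+O(1/n)=0$, with unperturbed roots $u=1$ and $u=6$. Evaluating $n F_2$ in the rescaled variable gives $u(u-3)(1-u/n)^{(n-4)/2}$, and using $(1-u/n)^{(n-4)/2}\le e^{-u/2+2u/n}$ one estimates the two extremal values; the asymptotic leading constants are $-2e^{-1/2}\approx -1.213$ and $18e^{-3}\approx 0.896$, both strictly inside $(-2,2)$. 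One then needs to verify that the finite-$n$ corrections do not spoil these estimates for all $n\ge 9$; this reduces to checking the boundary case $n=9$ numerically (where $|nF_2|\approx 1.51$ and $1.33$ at the two critical points) and confirming monotonicity of the gap in $n$ thereafter.

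The main obstacle is the $a=2$ case. Unlike $a=1$, the function $F_2$ is neither of constant sign nor unimodal on $(0,1]$, and the asymptotic bounds $2e^{-1/2}$ and $18e^{-3}$ are tight enough that the constant $2$ in the statement cannot be improved; all of the room in the estimate is absorbed by the careful choice of the exponential upper bound $(1-x)^k\le e^{-kx}$ combined with an explicit treatment of the small-$n$ correction. Making this uniform in $n\ge 9$ is the technical heart of the argument.
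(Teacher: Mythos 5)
Your setup is correct and runs parallel to the paper's: both proofs compute $p^\ast_{1,n}(x)=x$ and $p^\ast_{2,n}(x)=3x+(n-3)x^2$ explicitly, reduce the lemma to bounding the one-variable functions $F_1(y)=y^{(n-2)/2}(y-1)$ and $F_2(y)=y^{(n-4)/2}(y-1)(ny-n+3)$ on $(0,1]$, and proceed by locating critical points. Your $a=1$ argument is complete: the critical point $\overline{y}=1-2/n$ and the bound $(1-2/n)^{(n-2)/2}\le e^{-1+2/n}\le e^{-7/9}<\tfrac12$ for $n\ge 9$ settle that case cleanly (the paper uses the slightly different numerical bound $5/12$ but the argument is the same). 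Your rescaling $u=n(1-y)$, the quadratic $u^2-(7-\tfrac{6}{n})u+6=0$ for the critical points, and the identity $nF_2=u(u-3)(1-u/n)^{(n-4)/2}$ with asymptotic extremal values $-2e^{-1/2}$ and $18e^{-3}$ are all correct and are a genuinely tidier normalization than the paper's (the paper instead splits $u(y)=g(y)-2f(y)$ and bounds $g$ separately).

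The gap is in closing the $a=2$ case uniformly for all $n\ge 9$, which you yourself flag as ``the technical heart'' but do not execute. Two specific problems: first, the proposed estimate $(1-u/n)^{(n-4)/2}\le e^{-u/2+2u/n}$ is too lossy for small $n$ --- at $n=9$ the positive critical point sits near $u\approx 5.17$, where $u(u-3)e^{-u/2+2u/n}\approx 2.7>2$, so this bound does not prove $|nF_2|<2$ there (the true value is $\approx 1.33$, so the statement holds, but not by this estimate). Second, the fallback of ``checking $n=9$ numerically and confirming monotonicity of the gap in $n$ thereafter'' asserts, without proof, that the value of $nF_2$ at an $n$-dependent critical point is monotone in $n$; this is plausible but is itself a nontrivial claim requiring an argument. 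The paper resolves exactly this difficulty by deriving explicit algebraic sandwiching bounds $c_1<y_{\min}<c_2$ and $d_1<y_{\max}<d_2$ valid for $n>18$ (resp.\ $n\ge 27$), substituting these into the factored form to get clean rational bounds, and then verifying the finitely many remaining cases $9\le n<27$ by computer. Your proof needs an analogous finite verification (or explicit finite-$n$ bounds) to be complete; as written, the asymptotic analysis alone does not cover $n\ge 9$. (A minor side issue: the claim that the constant $2$ ``cannot be improved'' is false, since $\sup_{n\ge 9}\max_y|nF_2|\approx 1.51$; the constants $a/n$ are chosen so that the weighted sum in the proof of Theorem~\ref{thm:main3} equals exactly $1$, not because they are tight.)
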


\begin{proof}
    Let us consider the cases $a=1$ and $a=2$ separately.
    
    If $a=1$, the inequalities to prove reduce to
            \[ -\frac{1}{n}\ <\ y^{\frac{n}{2}-1}\,(y-1)\ <\ \frac{1}{n}\enspace.\]
        As we are working under the assumption that $y\in (0,1]$, the expression in the middle is always nonpositive, therefor it suffices to prove the left inequality. After multiplying by $-1$, this reads
            \[ y^{\frac{n-2}{2}}\,(1-y)\ <\ \frac{1}{n}\enspace .\]
        Consider the function $f(y) = y^{\frac{n-2}{2}}(1-y)= y^{\frac{n-2}{2}} - y^\frac{n}{2}$ with domain $(0,1]$. Differentiating the function $f$ we obtain that its maximum value is attained at the unique critical point, which is located at $\overline{y} = \frac{n-2}{n}=1-\frac{2}{n}$. This yields
        \begin{align*}
            f(y) \ \leq \ f(\overline{y}) \ = \ \frac{2}{n} \left(1-\frac{2}{n}\right)^{\frac{n-2}{2}}\enspace .
        \end{align*}
        The sequence $\left(1-\tfrac{2}{n}\right)^{\frac{n-2}{2}}$ is strictly decreasing and converges to $\exp(-1)\approx 0.36$; moreover, it is already less than $\frac{5}{12}\approx 0.42$ for all $n\geq 9$. In particular, for $n\geq 9$ we obtain
            \[y^{\frac{n}{2}-2}\,(1-y) \ =\ f(y) \ <\ \frac{2}{n} \cdot \frac{5}{12} \ =\ \frac{5}{6n} \ <\ \frac{1}{n}\enspace ,\]
        which yields the desired inequality.
        
        If $a=2$, the inequalities to show are
            \[ -\frac{2}{n} \ <\  u(y) \ <\ \frac{2}{n}\enspace ,\]
        where $u(y) = y^{\frac{n-4}{2}} (y-1)(ny-n+3)$. Let $f(y)$ be as in the case $a=1$ above, and let $g(y)$ be
        \[ g(y)\ =\ y^{\frac{n}{2} - 2} \,(y-1)\,((n-2)y-n+3)\enspace .\]
        Then $u(y) = g(y)-2f(y)$. Hence, as we already proved $0 < f(y) < \frac{5}{6n}$ for $y\in (0,1]$, it suffices to show that
        \[ -\frac{1}{3n} \ \leq\ g(y) \ \leq\ \frac{2}{n} \enspace .\]
        The function $g(y)$ vanishes on the boundary of the interval $[0,1]$; and, for $y\in (0,1)$ we have $g(y)< 0$ if and only if $ y> 1-\tfrac{1}{n-2}$. Furthermore, the derivative of $g$ is
        \[
        g'(y)\ =\ \frac{1}{2}\, y^{\frac{n}{2}-3}\,\left(n(n-2)y^2+(-2n^2+9n-10)y+(n-4)(n-3)\right)
        \]
        from which we obtain two critical points besides $0$  at
        \[
        y_{\max} = 1-\frac{5}{2n}- \frac{1}{n}\sqrt{\frac{17}{4}-\frac{4}{n-2}}
        \;\; \text{ and } \;\;
        y_{\min } = 1-\frac{5}{2n}+ \frac{1}{n}\sqrt{\frac{17}{4}-\frac{4}{n-2}}\enspace .
        \]
        If $n > 18$, then $\frac{4}{n-2}< \frac{1}{4}$ and therefore 
        \[
        c_1:=1-\frac{1}{2n}\ <\ y_{\min}\ <\ 1-\frac{5-\sqrt{17}}{2n}=:c_2\ <\ 1
        \]
        and 
        \[
        d_1:=1-\frac{5+\sqrt{17}}{2n}\ <\ y_{\max}\ <\ 1-\frac{9}{2n}=:d_2\ <\ 1 \enspace .
        \]
        In particular, $y_{\min}>c_1>1-\tfrac{1}{n-2}>d_2>y_{\max}$ and hence $g(y_{\min}) < 0 < g(y_{\max})$. We conclude that $g(y)$ restricted to the interval $[0,1]$ attains its minimum at $y_{\min}$ and its maximum at $y_{\max}$. Therefore it suffices to prove that $g(y_{\max})\leq \frac{2}{n}$ and $g(y_{\min})\geq -\frac{1}{3n}$.
        
        With the bounds given above, and assuming again that $n>18$, we obtain
        \begin{eqnarray*}
        g(y_{\min})&>&1^{\frac{n}{2}-2}(1-c_1)((2-n)c_2+n-3)\\
        &=& \frac{1}{2n}\left(\frac{3-\sqrt{17}}{2}-\frac{5-\sqrt{17}}{n}\right)\\
        &>& \frac{1}{2n}\left(\frac{3-\sqrt{17}}{2}-\frac{5-\sqrt{17}}{18}\right)\\
        &=& \frac{1}{n}\cdot \frac{11-4\sqrt{17}}{18} \\
        &>& -\frac{1}{3n} \, .
        \end{eqnarray*}
        If furthermore $n\geq 27$ then
        \begin{eqnarray*}
        g(y_{\max})&<&d_2^{\frac{n}{2}-2}(1-d_1)((2-n)d_1+n-3)\\
        &=&\left(1-\frac{9}{2n}\right)^{\frac{n}{2}-2}  \frac{5+\sqrt{17}}{2n}\cdot\frac{3n-10+(n-2)\sqrt{17}}{2n}\\
        &=&\left(1-\frac{9}{2n}\right)^{\frac{n}{2}-2} \frac{8n-21+(2n-5)\sqrt{17}}{n^2}\\
        &<&\left(1-\frac{9}{2n}\right)^{\frac{n}{2}-2} \frac{8+2\sqrt{17}}{n}\\
        &<&\frac{2}{n}
        \end{eqnarray*}
        where in the last step we used that $\left(1-\frac{9}{2n}\right)^{\frac{n}{2}-2}$ is monotone decreasing and smaller than $\sqrt{17}-4$ for $n\geq 27$. For the remaining cases $9\leq n  < 27$ we used the computer to verify the upper and lower bounds for $g(y)$.
\end{proof}

The second key ingredient of our proof of Theorem~\ref{thm:main3} is the following  trigonometric identity.

\begin{lemma}\label{lem:transHstar}
    For all $\theta \in [0,\pi /2)$ and $n\geq 0$,
    \[
    \sum_{j=0}^{\lfloor\frac{n}{2}\rfloor} \binom{n}{2j} \left(-\tan^2\theta\right)^j\ =\ \frac{\cos n\theta}{\cos^n \theta} \, .
    \]
\end{lemma}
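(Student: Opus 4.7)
The plan is to derive the identity as a straightforward consequence of De Moivre's formula together with the binomial theorem. Observe that the right-hand side $\frac{\cos n\theta}{\cos^n\theta}$ already suggests extracting the real part of $(\cos\theta + i\sin\theta)^n$ and then normalizing by $\cos^n\theta$, which is nonzero for $\theta \in [0,\pi/2)$.

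Concretely, I would start from De Moivre's formula
\[
\cos n\theta + i \sin n\theta \;=\; (\cos\theta + i \sin\theta)^n
\]
and expand the right-hand side via the binomial theorem as
\[
\sum_{k=0}^{n} \binom{n}{k} \cos^{n-k}\theta \,(i\sin\theta)^{k}.
\]
Taking real parts kills all odd values of $k$, since $i^k$ is purely imaginary when $k$ is odd; the surviving terms are those with $k = 2j$ for $0 \le j \le \lfloor n/2 \rfloor$, and in those terms the factor $i^{2j}$ equals $(-1)^j$. This yields
\[
\cos n\theta \;=\; \sum_{j=0}^{\lfloor n/2 \rfloor} \binom{n}{2j} (-1)^j \cos^{n-2j}\theta \, \sin^{2j}\theta.
\]

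Finally, I would divide both sides by $\cos^n\theta$, which is strictly positive on $[0,\pi/2)$, and rewrite the ratio $\sin^{2j}\theta / \cos^{2j}\theta$ as $\tan^{2j}\theta$. Combining this with the sign $(-1)^j$ gives $(-\tan^2\theta)^j$, and the identity
\[
\frac{\cos n\theta}{\cos^n\theta} \;=\; \sum_{j=0}^{\lfloor n/2 \rfloor} \binom{n}{2j} (-\tan^2\theta)^j
\]
follows at once. There is no real obstacle here: the only thing to check is that $\cos\theta \neq 0$ on the stated domain (so division is legal) and that the odd-index terms indeed vanish when taking real parts, both of which are immediate. The case $n = 0$ is a trivial $1 = 1$, so no separate boundary argument is needed.
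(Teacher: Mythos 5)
Your proof is correct and follows essentially the same route as the paper: both arguments expand $(\cos\theta \pm \ii\sin\theta)^n$ via the binomial theorem and isolate the even-index terms (the paper by averaging the two conjugate expansions, you by taking the real part, which is the same operation), then divide by $\cos^n\theta$. No gaps; the two checks you flag (nonvanishing of $\cos\theta$ and cancellation of odd-index terms) are exactly the right ones and are handled correctly.
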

\begin{proof}
    As a consequence of the binomial theorem, for all $x\geq 0$
    \[
    \sum _{j=0}^{\lfloor \frac{n}{2}\rfloor} (-1)^j \binom{n}{2j} x^j\ =\ \frac{\left(1+\ii \sqrt{ x }\right)^n+\left(1-\ii\sqrt{x}\right)^n}{2}
    \]
    where $\ii$ denotes the imaginary unit. Substituting $x= \tan^2 \theta$ and using $\tan \theta = \tfrac{\sin \theta}{\cos \theta}$ then  yields
    \begin{eqnarray*}
    \sum_{j=0}^{\lfloor\frac{n}{2}\rfloor} (-1)^j\binom{n}{2j}\left(\tan^2\theta \right)^j&=& \frac{(1+\ii\cdot \tan\theta)^n + (1-\ii\cdot\tan\theta)^n}{2}\\
    &=& \frac{(\cos \theta + \ii\cdot \sin \theta)^n + (\cos \theta - \ii\cdot \sin \theta)^n}{2\cos^n \theta}\\ 
    &=&\frac{e^{\ii\, n\,\theta} + e^{-\ii\, n\,\theta}}{2\cos^n \theta}\\
    &=&\frac{\cos n\theta}{\cos^n\theta}
    \end{eqnarray*}
    for all $0\leq \theta < \pi /2$ as desired.
\end{proof}

We are now prepared to put the pieces together and prove Theorem~\ref{thm:main3}.
\begin{proof}[Proof of Theorem~\ref{thm:main3}] We discuss the case of  disconnected paving matroids before we turn our attention to the connected case.  

    By Lemma~\ref{lem:disconnected}, a loopless matroid of rank $2$ with three or more hyperplanes must be connected.
    In particular, a disconnected rank $2$ sparse paving matroid can have at most two hyperplanes of sizes one or two that partition its ground set. 
    
    There are exactly three disconnected sparse paving matroids of rank $2$, namely $U_{1,1}\oplus U_{1,1}$, $U_{1,1}\oplus U_{1,2}$ and $U_{1,2}\oplus U_{1,2}$. 
    By Example~\ref{ex:hstarDisconnected}, their $h^*$-polynomials are $1$, $1$, and $1+x$ respectively. Also the $h^*$-polynomials of the uniform matroid $U_{2,3}$ and the minimal matroid $T_{2,4}$ are $1$ and $1+x$.
    These are exactly the polynomials listed first in Theorem~\ref{thm:main3}.
    
    Now we are left with the case of connected sparse paving matroids. Corollary \ref{h_star_for_sparse_paving} gives us the formula
    \[ h^*(\mathscr{P}(M), x) \ =\  \sum_{j=0}^{\lfloor\frac{n}{2}\rfloor} \binom{n}{2j} x^j - (n-2\lambda)\, p^*_{1,n}(x) - \lambda\, p^*_{2,n}(x)\]
    for their $h^*$-polynomials which are of degree $\big\lfloor\frac{n}{2}\big\rfloor$ whenever the matroid $M$ is neither $U_{2,3}$ nor $T_{2,4}$.
    Applying Lemma~\ref{lem:transHstar}, we obtain
    \begin{align*}
        h^*(\mathscr{P}(M), -\tan^2\theta ) &=  \frac{\cos n\theta} {\cos^n\theta}  - \lambda\, p^*_{2,n}(-\tan^2\theta) - (n-2\lambda)\, p^*_{1,n}(-\tan^2\theta)\\
        &=  \frac{\cos n\theta  - \lambda\, \cos^n \theta \, p^*_{2,n}(-\tan^2\theta) - (n-2\lambda)\, \cos^n\theta\, p^*_{1,n}(-\tan^2\theta)}{\cos^n\theta}\, .
    \end{align*}
  Let us denote the numerator of this fraction by $q(\theta)$. Observe that using the substitution $y=\cos^2\theta$, we obtain
  \[ \cos n\theta - q(\theta) \ =\  \lambda\, y^\frac{n}{2}\, p^*_{2,n}\left(\tfrac{y-1}{y}\right) + (n-2\lambda)\, y^{\frac{n}{2}}\, p^*_{1,n}\left(\tfrac{y-1}{y}\right)\enspace.\] 
  In particular, as $y\in (0,1]$, we can apply Lemma \ref{lem:bounds} to derive the bounds
   \[ -1 \ =\ - \lambda\, \tfrac{2}{n} - (n-2\lambda)\, \tfrac{1}{n}\ <\ \cos n\theta - q(\theta)\ <\ \lambda\, \tfrac{2}{n} + (n-2\lambda)\, \tfrac{1}{n}\ =\ 1 \enspace , \]
  for all $\theta\in [0,\frac{\pi}{2})$, which is equivalent to
  \begin{equation}\label{inequality-for-cos} \cos n\theta - 1\ <\ q(\theta)\ <\ \cos n\theta + 1.
  \end{equation}
  Let us consider the points $\theta_j := \frac{j\pi}{n}$ for $j=0,\ldots,\big\lfloor\frac{n}{2}\big\rfloor$.
 The inequalities \eqref{inequality-for-cos} imply that $q(\theta_j)>\cos j\pi -1 = 0$ whenever $j$ is even, and $q(\theta_j) < \cos j\pi +1 = 0$ whenever $j$ is odd.
  In particular, since thd function $q$ is continuous, by the Intermediate Value Theorem, we obtain that $q$ has a zero in each open interval $(\theta_j, \theta_{j+1})\subseteq[0,\frac{\pi}{2})$ for $j=0,\ldots,\big\lfloor\frac{n}{2}\big\rfloor-1$. Since
    \[ h^*(\mathscr{P}(M), -\tan^2\theta)\ =\ \frac{q(\theta)}{\cos^n\theta}\enspace ,\]
    and $\cos^n\theta$ is strictly positive for $\theta\in [0,\frac{\pi}{2})$, we obtain that $h^*(\mathscr{P}(M),-\tan^2\theta)$ has $\big\lfloor\frac{n}{2}\big\rfloor$ zeros in the interval $[0,\frac{\pi}{2})$. Since $\theta \mapsto -\tan^2 \theta$ is injective on the interval $[0,\tfrac{\pi}{2})$ this implies that $h^*(\mathscr{P},x)$ has $\big\lfloor\frac{n}{2}\big\rfloor$ distinct negative real roots. Since this number agrees with the degree of the $h^*$-polynomial, we conclude that it is real-rooted, as claimed.
\end{proof}

\section{Open Questions}\label{sec:Outlook}
\noindent

We conclude by discussing open questions and possible directions for future research on Ehrhart polynomials of matroid polytopes.

\subsection{\texorpdfstring{$h^*$}{h*}-polytopes of rank two matroids}

In~\cite{ferroni2020ehrhart} it was conjectured that $h^*$-polynomials of matroid base polytopes are real-rooted for all matroids. In Theorem~\ref{thm:main3} we have answered the question in the affirmative for sparse paving matroids of rank $2$. It would be interesting to see if our approach can be extended to show real-rootedness for all matroids of rank $2$.

A first step towards proving this conjecture could be to show that
for every $a\geq 1$ there is an $N$ such that for $n>N$ and  $y\in (0,1]$
\begin{align}\label{eq:strongBounds}
    -\frac{a}{n} \ <\  y^{\frac{n}{2}}\, p_{a,n}^\ast \left(\frac{y-1}{y}\right) \ < \ \frac{a}{n} \enspace .
\end{align}
The same arguments as in the proof of Theorem~\ref{thm:main3} then would show that matroids with hyperplanes of size at most $a$ and sufficiently large ground set have matroid polytopes whose $h^*$-polynomial is real-rooted. However, notice that the $h^*$-polynomial has degree lower than $\big\lfloor\frac{n}{2}\big\rfloor$ whenever there is a hyperplane of size $\big\lfloor\frac{n}{2}\big\rfloor$ or larger. The information about the degree of the $h^*$-polynomial is critical to our approach to construct the correct number of points such that the corresponding evaluations of the $h^*$-polynomial alternate in sign in order to be able to use the Intermediate Value Theorem. It would be interesting to see if and how the approach can be modified to lead to a proof of the real-rootedness of arbitrary rank $2$ matroids and possibly to higher rank matroids. 

\subsection{Positroids}
In \cite{postnikov} Postnikov introduced positroids. These are the matroids that are representable over the real numbers by a matrix whose maximal minors are all nonnegative. Alternatively, positroids can be characterized geometrically by the following property of their base polytopes; see \cite[Proposition 5.6]{ardila-rincon-williams} or \cite[Theorem 2.1]{lam-postnikov}.
\begin{theorem}
    A matroid $M$ is a positroid if and only if the polytope $\mathscr{P}(M)$ can be written using inequalities of the form
        \[\alpha_{i,j}\ \leq\ x_i + x_{i+1} + \cdots + x_j\ \leq\ \beta_{i,j}\]
    for some $\alpha_{i,j},\beta_{i,j}\in\ZZ$.
\end{theorem}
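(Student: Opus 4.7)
The theorem is a biconditional, and I would prove the two directions separately. Since $\mathscr{P}(M)$ lies in the affine hyperplane $\sum_i x_i = \rk(M)$, each lower-bound inequality $x_i + x_{i+1} + \cdots + x_j \geq \alpha_{i,j}$ can be rewritten as an upper-bound inequality on the sum over the complementary cyclic interval $\{1,\ldots,i-1\}\cup\{j+1,\ldots,n\}$. Thus the theorem is equivalent to the statement that $M$ is a positroid if and only if $\mathscr{P}(M)$ is cut out by inequalities $\sum_{i\in I} x_i \leq \beta_I$ as $I$ ranges over cyclic intervals of $\{1,\ldots,n\}$. My plan is to introduce the \emph{positroid envelope}
\[
\mathscr{P}_{\mathrm{env}}(M) \ = \ \left\{ x \in \RR^n_{\geq 0} : \sum_{i=1}^n x_i = \rk(M) \text{ and } \sum_{i \in I} x_i \leq \rk_M(I) \text{ for each cyclic interval } I \right\},
\]
which obviously contains $\mathscr{P}(M)$, and to prove that equality holds precisely when $M$ is a positroid.

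First I would show that $\mathscr{P}_{\mathrm{env}}(M)$ is a $0/1$-polytope. The coefficient matrix encoding the cyclic-interval inequalities has the consecutive-ones property after a cyclic relabelling of its rows, and is therefore totally unimodular; together with the integrality of the right-hand sides this forces every vertex to be a $0/1$-vector. The vertices of $\mathscr{P}_{\mathrm{env}}(M)$ are then exactly the indicator vectors of those subsets $B \subseteq \{1,\ldots,n\}$ of cardinality $\rk(M)$ satisfying $|B\cap I|\leq \rk_M(I)$ for every cyclic interval $I$; call such $B$ \emph{cyclically admissible}. The theorem thus reduces to: $M$ is a positroid if and only if its bases are exactly the cyclically admissible subsets.

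For the forward direction I would invoke Postnikov's machinery. Given a totally nonnegative matrix $A$ representing a positroid $M$, one constructs its Grassmann necklace, whose $k$-th entry is the lexicographically minimal basis of $M$ with respect to the linear order $k < k+1 < \cdots < n < 1 < \cdots < k-1$. The positivity of the maximal minors of $A$ (formalised via a Lindstr\"om--Gessel--Viennot count on a plabic network realising $A$) implies that $M$ is the intersection of the $n$ Schubert matroids associated with the cyclic shifts of $1 < 2 < \cdots < n$, and the bases of a Schubert matroid are themselves characterised by initial cyclic-interval rank conditions. Since every cyclic interval is initial for some cyclic shift, every cyclically admissible subset is already a basis of $M$, giving $\mathscr{P}(M) = \mathscr{P}_{\mathrm{env}}(M)$.

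For the backward direction, assume $\mathscr{P}(M) = \mathscr{P}_{\mathrm{env}}(M)$, so that the bases of $M$ are exactly the cyclically admissible sets. From the cyclic-interval rank data one reads off a Grassmann necklace, from the necklace a decorated permutation in $S_n$, and then Postnikov's boundary-measurement construction produces a plabic graph whose boundary measurements yield a totally nonnegative matrix with column matroid equal to $M$. This step, rather than the envelope bookkeeping, is where I expect the main difficulty to lie: verifying that the plabic graph outputs a matrix whose bases coincide with the cyclically admissible sets requires a careful bijective argument via the decorated permutation and an inductive analysis of the positroid stratification of the nonnegative Grassmannian. Alternatively, one can package this technical core into Oh's theorem, which asserts exactly that matroids defined by cyclic-interval rank conditions are positroids, and apply it as a black box to finish the proof.
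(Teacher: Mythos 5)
The paper does not prove this statement at all: it is quoted as a known characterization with citations to Ardila--Rinc\'on--Williams and Lam--Postnikov, so there is no in-paper argument to compare against. Your outline is essentially the route those references (together with Oh's theorem) take: reduce to the claim that $\mathscr{P}(M)$ equals the polytope cut out by the cyclic-interval rank inequalities, show that polytope is integral, and identify its lattice points with the bases determined by the Grassmann necklace. That said, as a proof the proposal has two problems. The concrete one is your total unimodularity step: the incidence matrix of \emph{cyclic} intervals does not have the consecutive-ones property after any single relabelling, and it is not totally unimodular --- already for $n=3$ the rows indexed by $\{1,2\}$, $\{2,3\}$, $\{3,1\}$ form a matrix of determinant $2$. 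No cyclic shift can simultaneously make all wrap-around intervals into linear ones. The standard repair is exactly the complementation trick you use elsewhere in the argument: on the hyperplane $\sum_i x_i=\rk(M)$ every wrap-around interval inequality is equivalent to a lower bound on a linear interval, and the resulting linear-interval system (equivalently, the difference system $y_j-y_i$ in the partial-sum coordinates $y_j=x_1+\cdots+x_j$, which is a network matrix) is totally unimodular. Without this correction the integrality of your envelope, and hence the reduction to ``bases $=$ cyclically admissible sets,'' does not go through as written.

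The second issue is that both directions of the biconditional are ultimately outsourced: the forward direction to the statement that a positroid is the intersection of the cyclically shifted Schubert matroids of its Grassmann necklace, and the backward direction to the plabic-graph realization of a matroid satisfying cyclic-interval rank conditions. Both of these are precisely Oh's theorem and Postnikov's boundary-measurement construction, i.e.\ the entire mathematical content of the result being proved. You are candid about this, and invoking these as cited black boxes is a legitimate way to present the theorem (it is how the present paper treats it), but it means the proposal is a correctly structured reading guide to the literature rather than an independent proof; the one step you do attempt to carry out in detail, the unimodularity argument, is the step that is wrong as stated.
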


Recall that as a consequence of Equation \eqref{ineq:hyperplanes} we can see that, up to a relabelling of the ground set, any rank $2$ matroid is a positroid. More precisely, if we assume that a rank $2$ matroid on $n$ elements has its elements labeled such that all the hyperplanes $H_1,\ldots, H_s$ consist of consecutive numbers, i.e., each hyperplane is of the form $H = \{i,i+1,\ldots,j\}$, then we obtain a description of $\mathscr{P}(M)$ that reveals that it is a positroid. Observe that here we are leveraging the fact that the hyperplanes of a rank $2$ matroid form a partition of the ground set.

\begin{corollary}
    All rank $2$ matroids are isomorphic to positroids of rank $2$.
\end{corollary}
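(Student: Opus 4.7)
The plan is a direct application of the positroid characterization cited above to the polyhedral description in Equation~\eqref{ineq:hyperplanes}. The key observation, already emphasized in the discussion preceding the corollary, is that the hyperplanes of a loopless rank $2$ matroid partition its ground set, which is precisely what allows a relabeling under which every hyperplane becomes an interval of consecutive indices.

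First I reduce to the loopless case: if $M$ has $m$ loops, then $M \cong M' \oplus U_{0,m}$ with $M'$ loopless, and any positroid-style description of $\mathscr{P}(M')$ can be extended by the trivial inequalities $0 \leq x_i \leq 0$ (which are of the required form with $j = i$) for each loop $i$. So fix a loopless rank $2$ matroid $M$ on $n$ elements with hyperplanes $H_1,\ldots,H_s$. Since these partition $\{1,\ldots,n\}$, I relabel the ground set by listing the elements of $H_1$ first, then those of $H_2$, and so on, so that after the relabeling each $H_r$ has the form $\{i_r, i_r+1, \ldots, j_r\}$ for some consecutive integers $i_r\leq j_r$.

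In the new labeling, the description~\eqref{ineq:hyperplanes} of $\mathscr{P}(M)$ consists of the box constraints $0 \leq x_i \leq 1$ (of the required form with $i = j$); the equality $x_1 + \cdots + x_n = 2$, rewritten as the pair of inequalities $2 \leq x_1 + \cdots + x_n \leq 2$; and, for each hyperplane, $0 \leq x_{i_r} + \cdots + x_{j_r} \leq 1$ (the lower bound being trivially implied by nonnegativity). All of these fit the positroid template $\alpha_{i,j} \leq x_i + x_{i+1} + \cdots + x_j \leq \beta_{i,j}$ with $\alpha_{i,j}, \beta_{i,j} \in \ZZ$, so the relabeled matroid is a positroid and $M$ is isomorphic to it.

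There is essentially no obstacle here: once one has Equation~\eqref{ineq:hyperplanes} and the partition property of the hyperplanes of a loopless rank $2$ matroid in hand, the statement is immediate from the relabeling described above. The only mildly delicate point is to ensure that the equality constraint and the nonnegativity constraints are recorded explicitly in the positroid template, which is handled by the choices above.
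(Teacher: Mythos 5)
Your argument is correct and is essentially the paper's own: relabel the ground set so that each hyperplane of the loopless matroid becomes an interval of consecutive indices (possible because the hyperplanes partition the ground set), and observe that the description in Equation~\eqref{ineq:hyperplanes} then matches the positroid template. Your explicit handling of loops and of the equality constraint is a minor tidying of details the paper leaves implicit, not a different route.
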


Since we have proved that polytopes of rank $2$ matroids indeed are Ehrhart positive, and the first author showed that this also holds true for uniform matroids and minimal matroids in any rank \cite{ferroni-hypersimplices,ferroni2020ehrhart}, we believe that this might be a phenomenon that extends to all positroids.

\begin{conj}
    Positroids are Ehrhart positive.
\end{conj}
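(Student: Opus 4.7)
The plan is to attack the conjecture through the interval description of positroid polytopes provided by the theorem quoted just above the conjecture. Writing
\[
\mathscr{P}(M)\ =\ \Bigl\{x\in [0,1]^n\, :\, \alpha_{i,j}\leq \sum_{k=i}^{j}x_k\leq \beta_{i,j}\text{ for all }1\leq i\leq j\leq n\Bigr\},
\]
and performing the unimodular substitution $y_i := x_1+\cdots+x_i$ with $y_0=0$ and $y_n=\rk(M)$, the polytope is transformed (with preserved Ehrhart polynomial) into a lattice polytope in $\RR^{n-1}$ cut out by the window inequalities $\alpha_{i,j}\leq y_j-y_{i-1}\leq \beta_{i,j}$. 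This places $\mathscr{P}(M)$ inside the family of flow polytopes on acyclic graphs with interval capacities, which both adds structure to exploit and connects the conjecture to adjacent areas.

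The next step is to set up a deletion/contraction recursion for $\ehr(\mathscr{P}(M),t)$. Slicing $\mathscr{P}(M)$ along the hyperplane $x_i=0$ or $x_i=1$ produces two lower-dimensional positroid polytopes that correspond to the deletion and contraction of $i$ in $M$. Combined with a half-open decomposition in the spirit of Section~\ref{sec:EhrhartPoly}, iteration should express $\ehr(\mathscr{P}(M),t)$ as a signed sum of Ehrhart polynomials of products of unimodular simplices; each such product has Ehrhart polynomial $\prod_j\binom{t+r_j-1}{r_j-1}$, hence manifestly positive coefficients. The prototype is the rank-$2$ argument: the decomposition $\mathscr{Q}_{k,n}=\widetilde{\mathscr{Q}}_{k,n}\sqcup(\Delta_{1,k}\times\Delta_{1,n-k})$ of Proposition~\ref{prop:ehrQ}, applied across all interval constraints, produces exactly such a signed sum.

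The crucial third step is to show that the signed sum collapses to a polynomial whose coefficients are all positive. This is the higher-rank analog of the superadditivity of the polynomials $P_{a,n}$ established in Proposition~\ref{prop:superadditivity}. The cleanest route would be to find a direct combinatorial interpretation of each Ehrhart coefficient, for instance as a count of lattice paths or flows on the network arising from the interval description, from which positivity would be immediate. An intermediate route is to group the pieces of the decomposition according to a fixed positroidal subdivision, in the sense of Lam--Postnikov, and verify term by term that the contributions aggregate coefficient-wise to nonnegative polynomials.

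The main obstacle is precisely this last step. In rank $2$ the family $\{P_{a,n}\}$ is one-parameter and superadditivity could be proven by a short direct polynomial manipulation; for general positroids the analogous building blocks are indexed by the nested interval constraints and form a genuinely multi-parameter family whose combinatorial behavior is substantially more subtle, so one should not expect a single superadditivity inequality to suffice. As a tractable warm-up one could first attempt the conjecture for Schubert matroids, equivalently for the lattice path matroids of staircase shape, where the interval description is fully nested and the signed sum may be accessible via a Lindstr\"om--Gessel--Viennot style argument; a positive resolution there would provide both evidence and a template for the general conjecture.
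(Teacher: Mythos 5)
This statement is a \emph{conjecture}: the paper offers no proof of it, and your text is likewise not a proof but a research program. Its decisive third step --- showing that the signed sum of Ehrhart polynomials of products of simplices collapses to a positive polynomial, the ``higher-rank analog of superadditivity'' --- is precisely the content of the conjecture, and you explicitly leave it open. One should be clear that this gap cannot be closed formally from the shape of the sum: Ferroni's rank-$3$ counterexamples to Ehrhart positivity of matroid polytopes arise from exactly this kind of alternating formula (hypersimplex minus half-open pieces, in the spirit of Section~\ref{sec:EhrhartPoly}), yet have negative Ehrhart coefficients. So any successful argument must isolate and exploit structure specific to positroids; the one-parameter superadditivity of Proposition~\ref{prop:superadditivity} is a special rank-$2$ fact, not a template that merely needs more indices.

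There are also concrete defects in the intermediate steps. Slicing $\mathscr{P}(M)$ along $x_i=0$ or $x_i=1$ does not decompose it: these hyperplanes cut out the facets $\mathscr{P}(M\setminus i)$ and $\mathscr{P}(M/i)$, which are lower-dimensional and cannot dissect a full-dimensional polytope, so no deletion/contraction recursion for $\ehr(\mathscr{P}(M),t)$ follows as stated. More fundamentally, the rank-$2$ decomposition you cite as prototype works only because of Lemma~\ref{lem:seperation}: every point of $\Delta_{2,n}$ outside the matroid polytope violates \emph{exactly one} hyperplane inequality, a consequence of the hyperplanes of a loopless rank-$2$ matroid partitioning the ground set (the split/paving property). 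For a general positroid the interval constraints $\alpha_{i,j}\leq x_i+\cdots+x_j\leq \beta_{i,j}$ overlap and nest, a point can violate many of them simultaneously, and the complement decomposes only via full inclusion--exclusion over arbitrary subsets of violated constraints --- the clean one-term-per-constraint signed sum you assume does not exist. Your substitution $y_i=x_1+\cdots+x_i$ is fine (it exhibits $\mathscr{P}(M)$ as an alcoved polytope, which is essentially the Lam--Postnikov characterization quoted in the paper), but embedding the problem in the larger class of polytopes with difference constraints makes positivity harder, not easier, since it is not known for that class either. The Schubert/lattice-path warm-up is a sensible suggestion, but as written your proposal proves no case of the conjecture beyond the rank-$2$ results the paper already contains.
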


It is worth mentioning that positroids encompass (up to isomorphism) another well studied family of matroids, namely lattice path matroids \cite[Lemma 23]{oh}. Lattice path matroids contain all uniform and minimal matroids, but not all matroids of rank $2$. On the other hand, the fact that positroids are representable matroids over the real field gives rise to questions on Ehrhart positivity for representable matroids.  \\

\noindent\textbf{Acknowledgements:} LF is supported by a Marie Sk{\l}odowska-Curie PhD fellowship as part of the program INdAM-DP-COFUND-2015, Grant Number 713485. KJ is supported by the Wallenberg AI, Autonomous Systems and Software Program
funded by the Knut and Alice Wallenberg Foundation, as well as Swedish
Research Council grant 2018-03968 and the G\"oran Gustafsson Foundation.
BS is supported by the Knut and Alice Wallenberg Foundation.

\bibliographystyle{abbrv}
\bibliography{bibliography}

\end{document}